\newtheorem{theorem}{Theorem}
\newtheorem{corollary}[theorem]{Corollary}
\newtheorem{definition}[theorem]{Definition}
\newtheorem{lemma}[theorem]{Lemma}
\newtheorem{remark}[theorem]{Remark}
\newenvironment{proof}[1][Proof]{\textbf{#1.} }{\ \rule{0.5em}{0.5em}}
\begin{document}

\title{Subspaces of an arithmetic universe via type theory}
\author{Maria Emilia Maietti\\
Dipartimento di Matematica Pura ed Applicata, University of Padova,\\
via Trieste n.63, 35121, Padova, Italy\\
maietti@math.unipd.it\\
}
\maketitle
\begin{abstract}
We define the notion of subspace of an arithmetic
universe by using  its internal dependent type theory.
\end{abstract}

\section{Introduction\label{IntroSectn}}
In the recent submitted paper  with Steve Vickers~\cite{mv:arithind}
 we defined the notion of subspace
of an arithmetic universe as a free categorical structure built by means of the partial logic
in  \cite{PHLCC}.

Here we show how we can define subspaces of an arithmetic universe
by using its internal type theory in 
\cite{Maietti:ModCor}.

In the following we use the abbreviation AU for ``arithmetic universe''
as defined in \cite{Maietti:AritJ}. There we gave a general notion
of the instance of arithmetic universes built by Andr\'e Joyal~\cite{Joyal}
in the seventies.
By an AU functor between arithmetic universes we mean a functor preserving
the AU structure up to isomorphisms.

By a subspace of $\cal A$ 
  we mean an AU with extra structure $S$, expressed in terms of new arrows and commutativities, to be added to $\cal A$ and we call it ${\cal A}[S]_t$ 
(where $t$ stands for a type-theoretic description of the free structure).
We take as its universal property the following one:
we have an AU embedding functor $\mathcal{I}:\mathcal{A}\rightarrow \mathcal{A}[S]_t$ and for any AU $\mathcal{B}$,
the category of AU functors
 $\mathbf{AU}(\mathcal{A}[S]_t,\mathcal{B})$ is equivalent to the
category of pairs $(F,\alpha)$ where $F:\mathcal{A}\rightarrow \mathcal{B}$ is an AU functor and $\alpha$ interprets the structure in $S$ with respect to $F$
as in \cite{Maietti:ModCor}.

To show the existence of such subspaces we first define
the internal language of an arithmetic universe $\cal A$ as the free
arithmetic universe generated from $\cal A$, as defined in 
\cite{Maietti:ModCor},
 to which we add coherent isomorphisms
making the free AU-structure added to  $\cal A$ isomorphic to the existing
AU structure in $\cal A$. We call $T_{iso}({\cal A})$ the internal type theory of $\cal A$ 
with such coherent isomorphisms.  This internal type theory of   $\cal A$
differs from that 
defined in \cite{Maietti:ModCor}, and called $T({\cal A})$,
because the first has coherent isomorphisms.
This difference becomes clear when we look at the embedding of  $\cal A$
in the syntactic categories $ \mathcal{C}_{ T(\mathcal{A})}$ and $\mathcal{C}_{ T_{iso}(\mathcal{A})}$ built out of   $T({\cal A})$ and $T_{iso}({\cal A})$
respectively: while 
the category $\cal A$ embeds into $ \mathcal{C}_{ T(\mathcal{A})}$ via a functor
preserving the AU structure strictly, it embeds in
 $\mathcal{C}_{ T_{iso}(\mathcal{A})}$ only via an AU functor.

Then we show that the category of AU functors from the AU-category  $\cal A$
to the AU $\cal B$ 
 is in equivalence with that of
translations from the internal type theory with coherent isomorphisms
of $\cal A$ to that of $\cal B$.

Then we can define a subspace $\mathcal{A}[S]_t$ of an arithmetic universe $\cal A$
with extra structure $S$, expressed in terms of new arrows and commutativities
between them, as the syntactic category of the extension of 
$ T_{iso}(\mathcal{A})$
with the extra structure. In this way we can prove the desired universal
property that  AU functors from $\mathcal{A}$ in an AU $\cal B$ with
the necessary structure to interpret the extra structure $S$ lift to  AU functors
from $\mathcal{A}[S]_t$ in an uniquely up to iso way.

If we define subspaces with extra structure by using 
$ T(\mathcal{A})$ instead of $ T_{iso}(\mathcal{A})$ we just get
a subspace satisfying a lifting property only for functors preserving the
AU structure strictly. 

\section{Arithmetic universes}

Arithmetic universes are very much the creation of Andr\'{e} Joyal, in
unpublished work from the 1970s. The general notion was not clearly defined,
and we shall follow \cite{Maietti:AritJ} (which also discusses their
background in some detail) in defining them as \emph{list arithmetic pretoposes.}

\begin{definition}\label{AUDefn}
A \emph{pretopos} is a category equipped with finite limits, stable finite
disjoint coproducts and stable effective quotients of equivalence relations.
(For more detailed discussion, see, e.g., \cite[A1.4.8]{Elephant1}.)

A finitely complete category has \emph{parameterized list objects} (see
\cite{Maietti:AritJ}; also \cite{Cockett:Locoi}) if for any object $A$ there
is an object $\mathsf{List}(A)$ with maps $\mathsf{r}_{0}^{A}:1\rightarrow
\mathsf{List}(A)$ and $\mathsf{r}_{1}^{A}:\mathsf{List}(A)\times
A\rightarrow\mathsf{List}(A)$ such that for every $b:B\rightarrow Y$ and
$g:Y\times A\rightarrow Y$ there is a unique $\mathsf{rec}(b,g)$ making the
following diagrams commute%
$$
\def\objectstyle{\scriptstyle}
\def\labelstyle{\scriptstyle}
{\xymatrix @-0.1pc{
B\ar[rr]^{<id, r_{o}^{A}\cdot !_{B}>} \ar[drr]_{b}& &  B\times {\tt List}(A)
\ar[d]^{rec_{l}(b,g)}&&
 B\times ({\tt List}(A)\times A)\ar[ll]_{id\times r_{1}^{A}\quad}\ar[d]^{(rec_{l}(b,g) \times id_{A})\cdot \alpha}\\
& &Y && Y\times A \ar[ll]^{g}
}}
$$
where $\alpha:B\times(\mathsf{List}(A)\times A)\rightarrow(B\times
\mathsf{List}(A))\times A$ is the associativity isomorphism.

An \emph{arithmetic universe} (or \emph{AU}) \cite{Maietti:AritJ} is a
pretopos with parameterized list objects.
We assume that each arithmetic universe is equipped with a {\it choice}
of its structure. For example,
given two objects $A,B$ we can choose their product
and the pairing morphisms of two morphisms.
Note that an AU has all coequalizers, not just the quotients of equivalence
 relations as shown in \cite{Maietti:AritJ}.

This is because the list objects allow one to construct the transitive
closure of any relation.

A functor between AUs is an \emph{AU functor} if it preserves the AU structure
(finite limits, finite colimits, list objects) non-strictly, i.e. up to
isomorphism. We write $\mathbf{AU}$ for the category of AUs and AU functors.
(We shall sometimes refer to a strict AU functor, preserving structure on the nose,
as an \emph{AU homomorphism}.)
\end{definition}



\subsection{Free structures via type theory}
In order to adjoin structure freely
to an AU   we can use its internal type theory devised in
\cite{Maietti:ModCor}.

We start by recalling the necessary notions from \cite{Maietti:ModCor}.
\begin{definition}[${\cal T}_{au}$-theory]
\label{AUExtnDefn}
We write ${\cal T}_{au}$ for the typed calculus that provides the internal language
of arithmetic universes in \cite[section 3]{Maietti:ModCor}.

We call a theory $T$ of the typed calculus
of arithmetic universes ${\cal T}_{au}$,
(in short: a \emph{$\mathcal{T}_{au}$-theory}),
a typed calculus extended with judgements of the form

$$B\ [\Gamma ]\qquad B=C\ [\Gamma ]\qquad c\in C  \ [\Gamma]
\qquad c=d\in C  \ [\Gamma]
 $$
i.e. new types, new elements of types, and new equalities between them.
\end{definition}

\begin{definition}[syntactic category]
For a given ${\cal T}_{au}$-theory $T$,
let ${\cal C}_{T}$ be the syntactic category built out of $T$
as in \cite[section 5.2]{Maietti:ModCor}.
\end{definition}

\begin{definition}[internal theory of an AU as an AU]
Given an arithmetic universe $\mathcal{A}$,
let $T(\mathcal{A})$ be the ${\cal T}_{au}$-theory
that is the  internal language of $\mathcal{A}$.
It is defined by the method exemplified with pretoposes in \cite[section 5.4]{Maietti:ModCor}.

Let us call 
$ {\tt Em}^T: \mathcal{A}\rightarrow T(\mathcal{A})$
the embedding of an object in $\mathcal{A}$ as a proper type
and of  a morphism in $\mathcal{A}$ as a proper term
in its internal type theory. Then, let us simply call
$ {\tt Em}: \mathcal{A}\rightarrow {\cal C}_{T(\mathcal{A})}$ 
the embedding of an object $X$ and a morphism $f$ to their copy in   
the syntactic category $ {\cal C}_{T(\mathcal{A})}$
defined on 
page 1119 of \cite{Maietti:ModCor}.
Finally, let us call ${\tt V}: {\cal C}_{T(\mathcal{A})}\rightarrow \mathcal{A}$
the functor
establishing an equivalence with $ {\tt Em}$ (this called $\epsilon^{-1}_{\cal A}$
in \cite{Maietti:intpre}).

\end{definition}

\begin{definition}[theory defining the free AU]
Given an AU $\mathcal{A}$, let $T_{cat}(\mathcal{A})$ be the free \emph{$\mathcal{T}_{au}$-theory} generated from $\mathcal{A}$ as a category, i.e.
the extension of the typed calculus
${\cal T}_{au}$ with 
the axioms arising from $\mathcal{A}$ considered as a category
according to  definition 5.30 of \cite{Maietti:ModCor}.

Its syntactic category  ${\cal C}_{T_{cat}(\mathcal{A})}$ 
is the free AU generated from $\mathcal{A}$ as a category,
as shown in \cite[section 5.5]{Maietti:ModCor}.
\end{definition}

\begin{definition}
Given an AU $\mathcal{A}$, let ${\cal Y}: \mathcal{A}\rightarrow {\cal C}_{T_{cat}(\mathcal{A})}$ be the functor embedding of theorem 5.31 in \cite{Maietti:ModCor},
sending  an object $X$ and a morphism $f$ to their copy in
$ {\cal C}_{T_{cat}(\mathcal{A})}$. For easiness we keep the same notation here.

Then, let $ {\tt Tr}_{\cal A}: T_{cat}(\mathcal{A})\longrightarrow  T(\mathcal{A})$
be the interpretation functor defined as follows:
it sends proper types  and terms arising respectively from objects
and morphisms of  $\cal A$ to the corresponding ones in  $T(\mathcal{A})$
 and types and terms constructors of $\mathcal{T}_{au}$
to their copy in $T(\mathcal{A})$ according to the interpretation
exemplified for pretopoi in section 5 of  \cite{Maietti:ModCor}.

Then, the functor ${\cal C}({\tt Tr}_{\cal A}): {\cal C}_{T_{cat}(\mathcal{A})}\rightarrow
 {\cal C}_{T(\mathcal{A})}$ sends each closed type and term in $ {\cal C}_{T_{cat}(\mathcal{A})}$  to their translation via ${\tt Tr}_{\cal A}$ in $T(\mathcal{A})$.
\end{definition}

Now we intend to define the internal type theory of an AU $\mathcal{A}$
as the extension of $T_{cat}(\mathcal{A})$ with {\it coherent isomorphisms}
connecting the free AU-structure with the chosen AU-structure in $\mathcal{A}$.
We will call such an internal type theory $T_{iso}(\mathcal{A})$.

Categorically this means that we require the existence
of a natural isomorphism between the identity functor
${\tt Id}: {\cal C}_{T_{cat}(\mathcal{A})} \rightarrow {\cal C}_{T_{cat}(\mathcal{A})}$
and the functor
$$(\, {\cal Y}\cdot {\tt V}\, )\cdot {\cal C}({\tt Tr}_{\cal A}): {\cal C}_{T_{cat}(\mathcal{A})}\rightarrow  {\cal C}_{T(\mathcal{A})}\rightarrow \mathcal {A}\rightarrow {\cal C}_{T_{cat}(\mathcal{A})}$$
Given the importance of this functor we give it a new name:
\begin{definition}[${\cal A}$-reflection]
Let the functor 
$${\tt R}^{\cal A}:{\cal C}_{T_{cat}(\mathcal{A})}
\rightarrow {\cal C}_{T_{cat}(\mathcal{A})}$$
be defined as ${\tt R}^{\cal A}\, \equiv\,
(\, {\cal Y}\cdot {\tt V}\, )\cdot {\cal C}({\tt Tr}_{\cal A})$
and called the {\it ${\cal A}$-reflector} functor.
\end{definition}

Note also that the {\it ${\cal A}$-reflector} functor
restricted to  $\mathcal{A}$ is essentially the identity:
\begin{lemma}
\label{rest}
For any given AU $\mathcal{A}$
the functor ${\cal Y}: \mathcal{A}\rightarrow {\cal C}_{T_{cat}(\mathcal{A})}$
is naturally isomorphic to ${\tt R}^{\cal A}\cdot {\cal Y}$, that is
the restriction of the {\it ${\cal A}$-reflector} functor on $\mathcal{A}$.
\end{lemma}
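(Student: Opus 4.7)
The plan is to chase the definitions of the three functors and reduce the statement to the fact that $\tt{V}$ and $\tt{Em}$ are part of an equivalence. By the definition of ${\tt R}^{\cal A}$, we have
$${\tt R}^{\cal A}\cdot {\cal Y} \;=\; {\cal Y}\cdot {\tt V}\cdot \mathcal{C}({\tt Tr}_{\cal A})\cdot {\cal Y},$$
so the whole content of the lemma is an analysis of the composite $\mathcal{C}({\tt Tr}_{\cal A})\cdot {\cal Y}:\mathcal{A}\to \mathcal{C}_{T(\mathcal{A})}$.

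The first step is to show that $\mathcal{C}({\tt Tr}_{\cal A})\cdot {\cal Y}$ agrees with ${\tt Em}$, at least up to a canonical natural isomorphism. Unwinding, ${\cal Y}$ sends an object $X$ (resp.\ a morphism $f$) of $\mathcal{A}$ to its copy as a proper type (resp.\ proper term) in $T_{cat}(\mathcal{A})$, viewed in $\mathcal{C}_{T_{cat}(\mathcal{A})}$. The translation ${\tt Tr}_{\cal A}$ is defined precisely so as to send those proper types and proper terms to the corresponding proper types ${\tt Em}^T(X)$ and terms ${\tt Em}^T(f)$ of $T(\mathcal{A})$; passing to syntactic categories then gives exactly ${\tt Em}$. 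So on objects and morphisms strictly in the image of ${\cal Y}$, the identification is essentially by definition; what remains is to check that the two functors agree on the structural maps (products, coproducts, list objects, etc.) used to form composites and subobjects of $X$'s in $\mathcal{C}_{T_{cat}(\mathcal{A})}$. Here one uses that ${\tt Em}$ is itself an AU functor, so its action on such derived data matches the action of ${\tt Tr}_{\cal A}$ on the corresponding constructors up to coherent canonical iso. This gives a natural isomorphism
$$\mathcal{C}({\tt Tr}_{\cal A})\cdot {\cal Y} \;\cong\; {\tt Em}.$$

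Once this is in hand, composing on the left with ${\cal Y}\cdot {\tt V}$ yields
$${\tt R}^{\cal A}\cdot {\cal Y} \;\cong\; {\cal Y}\cdot {\tt V}\cdot {\tt Em},$$
and the lemma follows from the fact recorded in the preceding definition that ${\tt V}$ establishes an equivalence with ${\tt Em}$ (it is $\epsilon^{-1}_{\cal A}$ in the notation of \cite{Maietti:intpre}), so ${\tt V}\cdot {\tt Em}$ is naturally isomorphic to ${\tt Id}_{\mathcal{A}}$, and whiskering by ${\cal Y}$ gives the desired natural isomorphism ${\tt R}^{\cal A}\cdot {\cal Y}\cong {\cal Y}$.

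The routine part is the left-whiskering and the invocation of the equivalence; the main obstacle is the first step, namely verifying that $\mathcal{C}({\tt Tr}_{\cal A})\cdot {\cal Y}\cong {\tt Em}$. This requires a careful inspection of the definitions of ${\cal Y}$ (theorem 5.31 of \cite{Maietti:ModCor}), of ${\tt Em}$ (page 1119 of \cite{Maietti:ModCor}) and of ${\tt Tr}_{\cal A}$, together with the standard coherence argument that any two AU functors whose values on the generators (the proper types and terms coming from $\mathcal{A}$) agree must agree up to canonical natural isomorphism on all composite types and terms.
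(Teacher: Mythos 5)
Your proposal is correct and follows essentially the same route as the paper: identify $\mathcal{C}({\tt Tr}_{\cal A})\cdot{\cal Y}$ with ${\tt Em}$ on the proper types and terms coming from $\mathcal{A}$ (which the paper treats as holding on the nose, by the definition of ${\tt Tr}_{\cal A}$, so your coherence worry in the first step is lighter than you suggest), and then invoke the natural isomorphism ${\tt V}\cdot{\tt Em}\cong{\tt Id}_{\mathcal{A}}$ from the equivalence, whiskered by ${\cal Y}$. The paper's own proof is just a compressed version of this, locating the only non-trivial content in the fact that ${\cal C}_{T(\mathcal{A})}$ is merely equivalent, not isomorphic, to $\mathcal{A}$.
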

\begin{proof}
This follows from the fact that proper types
and terms via $Tr_{\cal A}$ are interpreted in objects and terms
isomorphic to the interpreted ones.
Indeed, for a given object $X$ in $\mathcal{A}$
then $(\, (\, {\cal Y}\cdot {\tt V}\, )\cdot {\cal C}(Tr_{\cal A})\, )
 (\,  {\tt Y}(X)\, )$
is ${\tt V}(X^{\tt Em})$, which is only isomorphic
to ${\cal Y}(X)\, \equiv\, X$ (indeed
$ {\cal C}_{T(\mathcal{A})}$ is only equivalent to $\mathcal{A}$
and not isomorphic to it!).
\end{proof}


In order to define  $T_{iso}(\mathcal{A})$ in an explicit way
we need to define the natural isomorphism as a family of isomorphisms
indexed on the objects of ${\cal C}_{T_{cat}(\mathcal{A})}$.
 Since such objects  
are closed types in $T_{cat}(\mathcal{A})$
 that are defined inductively 
out of the whole collection of types in $T_{cat}(\mathcal{A})$,
we thought of describing the desired
natural isomorphism as  a consequence of an isomorphism between
suitable interpretations of $T_{cat}(\mathcal{A})$ in ${\cal C}_{T_{cat}(\mathcal{A})}$. This means that we will  defined a family of suitable isomorphisms
indexed on the whole types  of $T_{cat}(\mathcal{A})$. These isomorphisms
will be called {\it coherent isomorphisms}.

Before proceeding we review some key aspects of how to interpret
 a dependent typed calculus, like  ${\cal T}_{au}$,
 into a category $\cal C$
as defined in  \cite{Maietti:ModCor}.
In particular we review how types, terms with their equalities are interpreted 
together with the
interpretation of substitution and weakening in types, in order to fix the notation of morphisms that will be 
involved in the notion of morphism between interpretations.

First of all the interpretation of a typed calculus
in a category $\mathcal A$ according to \cite{Maietti:ModCor}
is {\it actually} given in the category  $Pgr({\cal A})$ defined as follows:
\begin{definition}
Given a category ${\cal A}$ with terminal object $1$,
the objects of the category {\boldmath $Pgr({\cal A})$}
 are
 finite sequences $b_{1}, b_{2},...,b_{n}$ of morphisms of ${\cal A}$ 
$$
\def\objectstyle{\scriptstyle}
\def\labelstyle{\scriptstyle}
{\xymatrix @-0.5pc{
1&B_{1}\ar[l]^{b_{1}} & B_{2}
\ar[l]^{b_{2}} & \ar@{.}[l] &B_{n}\ar[l]^{b_{n}}  }}
$$
 and a  morphism  from $b_{1}, b_{2},...,b_{n}$
to $c_{1}, c_{2},...,c_{m}$
is a morphism $d$ of   ${\cal A}$   such that $c_{n}\cdot d=b_{n}$ in  ${\cal A}$ 
$
\def\objectstyle{\scriptstyle}
\def\labelstyle{\scriptstyle}
{\xymatrix @-1pc{
&&B_{n} \ar[rr]^{d} \ar[dr]^{b_{n}}
&          & C_{n} \ar[dl]^{b_{n}} \\
  1&   B_{1}\ar[l]^{!_{B_{1}}}&  \ar@{.}[l]    & B_{n-1} \ar[l]^{b_{n-1}} &
}}
$
provided that $n=m$ and $b_{i}=c_{i}$ for $i=1,...,n-1$.
Equality, composition and identity is that induced from $\cal A$.
\end{definition}

Now, given an arithmetic universe $\cal A$,
the interpretation of a dependent type
 $B\ [x\in C_1,..,x_n\in C_n]$
is given by an object in of $Pgr({\cal A})$
$$
\def\objectstyle{\scriptstyle}
\def\labelstyle{\scriptstyle}
{
\xymatrix @-1.5pc{
& & & &&& B_{\Sigma } \ar[dl]^{B^I} \\
  1& &  C_{1 \Sigma }\ar[ll]^{C_1^I}&  \ar@{.}[l] &   & C_{n \Sigma } 
  \ar[ll]^{C_n^I} &
}}
$$
The interpretation of a term  judgement $b\in B\ [\Gamma]$ is 
 a section in ${\cal A}$ of the last morphism $B^I$  of the sequence interpreting the 
dependent type $B$ under the context
$$
\def\objectstyle{\scriptstyle}
\def\labelstyle{\scriptstyle}
{
\xymatrix @-1pc{
& & C_{n\Sigma }\ar[rr]^{b^I} \ar[dr]^{id}&
         & B_{\Sigma } \ar[dl]^{B^I} \\
  1&   C_{1 \Sigma }\ar[l]^{C_1^I}&  \ar@{.}[l]    & C_{n\Sigma } 
  \ar[l]^{C_n^I} &
}}
$$
The equality between types under context is interpreted as equality of
the objects interpreting them in $Pgr({\cal A})$.
The equality between typed terms under context is interpreted as equality between
the sections interpreting them in $Pgr({\cal A})$.

Now we pass to show how substitution of terms  in types 
and weakening of assumptions in types are interpreted in 
 $Pgr({\cal A})$.

The notion of interpretation requires to be able 
  to interpret substitution and weakening   
as follows.
Given  a dependent type $B(x_1,x_2)\ [x_1\in C_1, x_2\in C_2]$ and a term
$c_2\in C_2\ [x_1\in C_1]$ interpreted as
$$
\def\objectstyle{\scriptstyle}
\def\labelstyle{\scriptstyle}
{
\xymatrix @-0.5pc{
&& &B_{\Sigma}\ar[dl]^{B^{I}}\\
1&C_{\Sigma 1}\ar[l]^{C_1^I}&C_{\Sigma 2}\ar[l]^{C_2^I}&\\
}}
\qquad
\def\objectstyle{\scriptstyle}
\def\labelstyle{\scriptstyle}
{
\xymatrix @-0.5pc{
  &C_{\Sigma  1}\ar[rr]^{c_2^I} \ar[dr]^{id}&
         & C_{2 \Sigma }  \ar[dl]^{C_2^I} \\
  1& &  C_{1 \Sigma }\ar[ll]^{C_1^I}&
}}
$$
we interpret
$B(x_1,x_2)[x_2/c_2]\equiv B(x_1,c_2)\ [x_1\in C_1]$
as
$$
\def\objectstyle{\scriptstyle}
\def\labelstyle{\scriptstyle}
{
\xymatrix @-0.5pc{
& &&B[x_2/c_2]_{\Sigma}\ar[dl]^{B[x_2/c_2]^{I}}\\
1&&C_{1 \Sigma }\ar[ll]^{C_1^I}&\\
}}
$$
where the last morphism
 $B[x_2/c_2]^{I}$ is the first projection of the following
substitution diagram:
$$
\def\objectstyle{\scriptstyle}
\def\labelstyle{\scriptstyle}
{\xymatrix @+0.5pc{
B[x_/c_2]_{\Sigma}\ar[rr]^{q_{B[x/c_2]}^I} \ar[d]_{B[x_2/c_2]^{I}}
& & B_{\Sigma} \ar[d]^{B^{I}}      \\
C_{1 \Sigma }\ar[rr]_{c_2^I}&&C_{2 \Sigma } 
}}
$$

Moreover, the type $B\ [x_1\in C_1,y\in D]$
obtained by
weakening 
  the dependent type $B\ [x_1\in C_1]$ with the type
$D\ [x_1\in C_1]$ interpreted as
$$
\def\objectstyle{\scriptstyle}
\def\labelstyle{\scriptstyle}
{
\xymatrix @-0.5pc{
&&  B_{\Sigma}\ar[dl]^{B^{I}}\\
1&C_{1\Sigma}\ar[l]^{C_1^I} &\\
}}
\qquad \def\objectstyle{\scriptstyle}
\def\labelstyle{\scriptstyle}
{
\xymatrix @-0.5pc{
&&  D_{\Sigma}\ar[dl]^{D^{I}}\\
1&C_{1\Sigma}\ar[l]^{C_1^I} &\\
}}
$$
is interpreted as
$$
\def\objectstyle{\scriptstyle}
\def\labelstyle{\scriptstyle}
{
\xymatrix @+0.5pc{
&& & w(B,D)_{\Sigma}\ar[dl]^{w(B,D)^I}\\
1& C_{1 \Sigma }\ar[l]^{C_1^I}& D_{\Sigma}\ar[l]^{D^I}&
}}
$$
where the last morphism $w(B,D)^I$ is the first projection of
the following weakening diagram:
$$
\def\objectstyle{\scriptstyle}
\def\labelstyle{\scriptstyle}
{\xymatrix @+0.5pc{
w(B,D)^I_{\Sigma}\ar[rrr]^{q_{w(B,D)}^I} \ar[d]_{w(B,D)^I}
&&&  B_{\Sigma} \ar[d]^{B^{I}}      \\
D_{\Sigma }\ar[rrr]_{D^I}&&&C_{1 \Sigma }  
}}
$$
where if the substitution or weakening is performed
 in the middle of the context we still use the same notation as follows.

\noindent
Recall that 
$\Gamma_{j+1}^n\,  \equiv \, x_{j+1}\in 
C_{j+1}, ..., x_{n}\in 
C_{n} $ for a given  context $\Gamma_n $  denoting
with $\Gamma_{o}$ the empty context, then
 the type judgement
$$B [x_j/c_{j}] \ [\Gamma_{j-1}, \Gamma_{j+1}^{n'} ]$$
obtained by substitution with $c_j\in C_J\ [\Gamma_{j-1}]$
where $\Gamma_{j+1}^{n'} \,  \equiv \,
x'_{j+1}\in C_{j+1}[x_j/c_{j}], ..., x'_{n}\in C_{n}[x_j/c_{j}]$
is interpreted as a morphism of $Pgr(\mathcal{A})$
$$
\def\objectstyle{\scriptstyle}
\def\labelstyle{\scriptstyle}
{
\xymatrix @-0.5pc{
& &&&& &&B[x_j/c_j]_{\Sigma}\ar[dl]^{B[x_j/c_j]^{I}}\\
1&&C_{1 \Sigma }\ar[ll]^{C_1^I}&& \ar@{.}[ll]& &C_{n}[x_j/c_{j}]_{\Sigma}\ar[lll]^{C_{n}[x_j/c_{j}]^I}&\\
}}
$$
where, if $\Gamma_{j+1->n}$ is not empty,
 the last morphism is the first projection of the substitution diagram:
$$
\def\objectstyle{\scriptstyle}
\def\labelstyle{\scriptstyle}
{\xymatrix @-0.5pc{
B[x_j/c_j]_{\Sigma}\ar[rrr]^{{q_{B[x_j/c_j]}}^I} \ar[d]_{ B[x_j/c_j]^{I}}
&&&  B_{\Sigma} \ar[d]^{B^{I}}      \\
C_n[x_j/c_j]_{\Sigma }\ar[rrr]_{{q_{C_n[x_j/c_j]}}^I}& && C_{n \Sigma} 
}}
$$
Moreover the type judgement
$ \ B\ [\Gamma_{j},y \in D,\Gamma_{j+1}^n ]\ $
obtained by weakening with a variable in the middle of the context
is interpreted as
$$
\def\objectstyle{\scriptstyle}
\def\labelstyle{\scriptstyle}
{
\xymatrix @-0.5pc{
& &&&& && w(B, D)_{\Sigma}\ar[dl]^{w(B, D)^{I}}\\
1&&C_{1 \Sigma }\ar[ll]^{C_1^I}& & \ar@{.}[ll] & & w(C_{n}, D)_{\Sigma}\ar[lll]^{ w(C_{n}, D)^I}&\\
}}
$$
If $\Gamma_{j+1}^n$ is not empty, then its last morphism
 is the first projection of the following weakening diagram:
$$
\def\objectstyle{\scriptstyle}
\def\labelstyle{\scriptstyle}
{\xymatrix @+0.5pc{
w(B,D)_{\Sigma}\ar[rr]^{{q_{w(B,D)}}^I} \ar[d]_{w(B,D)^I}
& &  B_{\Sigma} \ar[d]^{B^{I}}      \\
w(C_n,D)_{\Sigma}\ar[rr]_{{q_{w(C_n,D)}}^I} &&C_{n \Sigma } 
}}
$$
Then we give the following definition of {\it generic interpretation}:
\begin{definition}
A {\em  generic} interpretation
of a typed calculus $T$ is one that validates all judgements of the typed
calculus $T$ according to the above notion of judgement interpretations
including substitution and weakening.
\end{definition}
Note that to interpret substitution of terms in types correctly
we need a functorial choice of the above substitution diagrams in $\cal A$.

If we require the substitution and weakening diagrams to be {\it pullbacks} in $\cal A$,
as done in \cite{Maietti:ModCor},
then we need to provide a functorial choice of pullbacks in $\cal A$.

In order to build an interpretation of the type calculus ${\cal T}_{au}$
  in an arithmetic universe
with an arbitrary fixed choice of its structure (and hence with a choice
of pullbacks that is not necessarily functorial), one possibility is
to   define it via
 a preinterpretation
of types and terms into fibred functors
and natural transformations as described in
 \cite{Maietti:ModCor}. Here we refer to this  interpretation defined via fibred functors as a {\it canonical interpretation} of the typed calculus  ${\cal T}_{au}$.
We do not recall the definition of such an interpretation here and we refer the reader to \cite{Maietti:ModCor}.
We just remind that this canonical interpretation is crucial to describe
the internal type theory of an arithmetic universe.

Here we will mention two interpretations of the  typed calculus $T_{cat}(\mathcal{A})$ in ${\cal C}_{T_{cat}(\mathcal{A})}$
that are not defined via fibred functors (hence they are not canonical)
 and are only {\it generic} ones.
These are those interpretations whose action  on the syntactic
category ${\cal C}_{T_{cat}(\mathcal{A})}$  gives rises respectively
to  the identity functor and the  ${\cal A}$-reflector.
This implies that to meet our purpose of building a natural isomorphism
between the identity functor and the  ${\cal A}$-reflector is enough
to build an isomorphism  between the corresponding interpretations.

We now pass to describe the interpretation corresponding 
to the identity functor on ${\cal C}_{T_{cat}(\mathcal{A})}$:
\begin{definition}[The $(-)^{\cal H}$ interpretation]
Let us call $(-)^{\cal H}$
the interpretation of $T_{cat}(\mathcal{A})$ into the
category $Pgr({\cal C}_{T_{cat}(\mathcal{A})})$
 via indexed sums as defined
on page 1138 in \cite{Maietti:ModCor} with the warning of interpreting
the closed type $X$ as $X\rightarrow \top$
 (and not as $\def\objectstyle{\scriptstyle}
 \def\labelstyle{\scriptstyle}
 {\xymatrix @-0.5pc{ \Sigma_{z\in \top} X\ar[r]^{\pi_1}&\top}}$).
 For example
a type $B(x)\ [x\in C]$ is interpreted as 
$$\def\objectstyle{\scriptstyle}
 \def\labelstyle{\scriptstyle}
 {\xymatrix @-0.5pc{ 
\Sigma_{z\in  C} B(z)\ar[rr]^{\ {\pi}_1 } &&
C \ar[rr]^{\ !^C }& & \top}}$$
and a term $b(x)\in B(x)\ [x\in C]$ is interpreted as a section $<id, b(x)>$
 of the interpretation $\pi_1$ of its type, namely 
$\pi_1\cdot <id, b(x)>=id$
in   ${\cal C}_{T_{cat}(\mathcal{A})}$.
\end{definition}
In essence 
$(-)^{\cal H}$  interprets types and terms in themselves as indexed sum types
and sections. Hence it 
corresponds  on ${\cal C}_{T_{cat}(\mathcal{A})}$ to the identity functor.

Then, the interpretation of $T_{cat}(\mathcal{A})$ in $Pgr({\cal C}_{T_{cat}(\mathcal{A})})$ corresponding to
the  ${\cal A}$-reflector functor
is obtained by turning  the translation
 ${\tt Tr}_{\cal A}:T_{cat}(\mathcal{A})
\longrightarrow  T(\mathcal{A})$, used to built the reflector,
 into an interpretation
$Int_{\mathcal A} :T_{cat}(\mathcal{A})
\longmapsto Pgr(\mathcal{A}) $
 by composing ${\tt Tr}_{\cal A}$ 
with the semantic
denotation of $T(\mathcal{A})$-types and terms.
Then, by using the embedding functor  ${\cal Y}: \mathcal{A}\rightarrow  {\cal C}_{T_{cat}(\mathcal{A})}$ we can think of $Int_{\mathcal A}$ in 
$Pgr({\cal C}_{T_{cat}(\mathcal{A})})$ by keeping the same name
$$Int_{\mathcal A} :T_{cat}(\mathcal{A})
\longmapsto Pgr({\cal C}_{T_{cat}(\mathcal{A})}) $$
Note that this interpretation induces the   ${\cal A}$-reflector functor
on ${\cal C}_{T_{cat}(\mathcal{A})}$.

Now our task is to build an isomorphism between
the interpretations  $(-)^{\cal H}$  and 
$Int_{\mathcal A}$. In order to do so we
need to first define the notion of morphism between interpretations.
To this purpose we
 can not work in 
$Pgr({\cal C}_{T_{cat}(\mathcal{A})}) $ but we  pass to consider
 the category of arrow lists ${\cal C}_{T_{cat}(\mathcal{A})}^{\rightarrow fin}$ defined in \cite{Maietti:ModCor} as follows:

\begin{definition}[Category of arrow lists]
Given a category $\cal A$,
we define the category ${\cal A}^{\rightarrow fin}$ 
as follows:
 its objects are sequences 
$$
\def\objectstyle{\scriptstyle}
\def\labelstyle{\scriptstyle}
{\xymatrix @-0.5pc{
C_{0}&C_{1}\ar[l]^{c_{1}} & C_{2}
\ar[l]^{c_{2}} & \ar@{.}[l] &C_{n}\ar[l]^{c_{n}}  }}
$$
of composable ${\cal A}$-morphisms 
 and a morphism
 from $c_{1}\ ,\ c_{2}\ ,...,\ c_{n}$  to
$b_{1}\ ,\ b_{2}\ ,...,\ b_{n}$ 
 is a sequence 
 $\phi_{0}\ ,\ \phi_{1}\ ,...,\  \phi_{n}$ of ${\cal A}$-morphisms 
 such that all the following squares commute in $\cal A$
 $$
\def\objectstyle{\scriptstyle}
\def\labelstyle{\scriptstyle}
{\xymatrix @-1pc{
C_{n} \ar[r]^{\phi_{ n}} \ar[d]_{c_{n}}
&           B_{n} \ar[d]^{b_{n}} \\
   C_{n-1}\ar[d]_{c_{n-1}}  \ar[r]^{\phi_{ n-1}}   
 & B_{n-1} \ar[d]^{b_{n-1}}   \\
 \ar@{.}[d] & \ar@{.}[d]  \\
 C_{2}\ar[d]_{c_{2}}\ar[r]^{\phi_{ 2}}&  B_{2}\ar[d]^{b_{2}}\\
  C_{1}\ar[d]_{c_{1}}\ar[r]^{\phi_{ 1}} & B_{1}\ar[d]^{b_{1}}\\
C_{0}\ar[r]^{\phi_{ 0}}  &B_0
}}
$$
Two morphisms are equal if their $n$-th components are equal
for each $n$ in the list.
Composition of morphisms  is a morphism whose
$n$-th component is the composition of the $n$-th components 
of the given morphisms, and the identity is the morphism
whose components are all identities.
 


\end{definition}

Observe that  $Pgr({\cal A})$
 is a subcategory of ${\cal A}^{{\rightarrow}_{fin}}$ with the same objects
and where morphisms are all identities except for the last component.

Then we define the notion of {\it interpretation morphism}, 
and the associated one of  {\it interpretation isomorphism}, between
interpretations of a generic  ${\cal T}_{au}$-theory $T$ into a generic arithmetic
universe $\cal B$, or better in $Pgr(\mathcal{B})$, by relating them
in $ {\cal B}^{\rightarrow fin}$ as follows.
\begin{definition}\label{isointe}
Given a ${\cal T}_{au}$-theory $T$ and an arithmetic universe $\cal B$ and two
interpretation  $Int_1: T\longmapsto Pgr(\mathcal{B})$
and $Int_2: T\longmapsto Pgr(\mathcal{B})$
of $T$, we say that there is an {\em  morphism of 
interpretation} from $Int_1$ to $Int_2$
$$\sigma(-): Int_1\longrightarrow Int_2$$
if for each type judgement $B\ [\Gamma]$ of $T$
there exists an morphism in
 $\mathcal{B}^{{\rightarrow}_{fin}}$
$$\sigma_{B\ [\Gamma]}: (\, B\ [\Gamma]\, )^{Int_1}\rightarrow
 (\, B\ [\Gamma]\, )^{Int_2}$$

Moreover, supposed to represent
  $$\begin{array}{l}
(\, B\ [\Gamma]\, )^{Int_1}\, \equiv\, 
\def\objectstyle{\scriptstyle}
\def\labelstyle{\scriptstyle}
{\xymatrix @-0.5pc{ {B_{\Sigma}}^{Int_1} \ar[rr]^{ B^{Int_1}}
& & {C_{n\Sigma}}^{Int_1}\ar[rr]^{C_{n}^{Int_1}} & &
\ar@{.}[r] &  {C_{1 \Sigma }}^{Int_1}\ar[rr]^{ C_{1}^{Int_1}} && 1^{Int_1}}}\\
(\, B\ [\Gamma]\, )^{Int_2}\, \equiv\, 
\def\objectstyle{\scriptstyle}
\def\labelstyle{\scriptstyle}
{\xymatrix @-0.5pc{ {B_{\Sigma}}^{Int_2} \ar[rr]^{ B^{Int_2}}
&& {C_{n\Sigma}}^{Int_2}\ar[rr]^{C_{n}^{Int_2}} &&
\ar@{.}[r] &  {C_{1 \Sigma }}^{Int_2}\ar[rr]^{ C_{1}^{Int_2}} && 1^{Int_2}}}
\end{array}
$$

we represent $\sigma_{B\ [\Gamma]}$ in $\mathcal{B}^{{\rightarrow}_{fin}}$ as follows:
$$
\def\objectstyle{\scriptstyle}
\def\labelstyle{\scriptstyle}
{\xymatrix @-0.5pc{{ B_{\Sigma}}^{Int_1} \ar[r]^{\sigma_{ B}} \ar[d]_{ B^{Int_1}}
&           {B_{\Sigma}}^{Int_2}  \ar[d]^{B^{Int_2}} \\
   {C_{n\Sigma}}^{Int_1}\ar[d]_{C_{n}^{Int_1}}  \ar[r]^{\sigma_{ C_{n}} }  
 &{ C_{n\Sigma}}^{Int_2} \ar[d]^{C_{n}^{Int_2} }   \\
 \ar@{.}[d] & \ar@{.}[d]  \\
 {C_{2\Sigma }}^{Int_1}\ar[d]_{C{_2}^{Int_1}}\ar[r]^{\sigma_{C_2}}& { C_{2\Sigma}}^{Int_2}\ar[d]^{C_{2}^{Int_2}}\\
  {C_{1 \Sigma }}^{Int_1}\ar[r]^{\sigma_{C_1}}\ar[d]_{ C_{1}^{Int_1}} &
{ C_{1\Sigma}}^{Int_2}\ar[d]^{C_{1}^{Int_2}}\\
1^{Int_1}\ar[r]^{\sigma_{C_1}} & 1^{Int_2}
}}
$$
Then we require that each component $\sigma_{B}$ satisfy  the following conditions:
\begin{list}{-}{ }
\item {\tt naturality condition}
the last component of $\sigma_{B\ [\Gamma]}$ commutes with
the interpretation of its terms: for every term judgement $ b\in B\ [\Gamma]$
of $T$ $$\sigma_{B}\cdot b^{Int_1}= b^{Int_2}\cdot\sigma_{C_{n}}$$
in $ {\cal B}$, supposed  $\Gamma\, \equiv\,
x_1\in C_1,\dots , x_n\in C_n$
\item
{\tt weakening condition}
  the last component of $\sigma_{B\ [\Gamma]}$  commutes with the interpretation of weakening: for every judgement 
 $D\ type\ [\Gamma_{j}]$ in $T$ with $\Gamma_{j}$ sublist of $\Gamma$
 $$\sigma_{B}\cdot  q_{w(B,D)}^{Int_1} =q_{w(B,D)}^{Int_2}\cdot
\sigma_{w(B,D) }$$
where $w(B,D)$ is the type $B$ weakened on $D$ and
$q_{w(B,D)}^{Int_1}$ is the second projection of the weakening diagram of the last morphism
interpreting $B$ according to $Int_1$  along the context weakened with $D$.

 \noindent
 \item {\tt substitution condition}
the last component of $\sigma_{B\ [\Gamma]}$  commutes with the interpretation of substitution: 
for every term judgement $c_{j}\in C_{j}\ [\Gamma_{j-1}]$ in $T$ with $\Gamma_{j-1}$ sublist of $\Gamma$
 $$\sigma_{B}\cdot  q_{B[x/c_j]}^{Int_1}=q_{B[x/c_j]}^{Int_2} \cdot\sigma_{B[x_j/c_{j}]}$$
where $q_{B[x/c_j]}^{Int_1}$ is the second projection of the substitution diagram of the last morphism
interpreting $B$ according to $Int_1$ along the morphism
 expressing the substitution with $c_j$.
\end{list}

The interpretation morphism is an {\em interpretation isomorphism}
if each   
$$\sigma_{B\ [\Gamma]}: (\, B\ [\Gamma]\, )^{Int_1}\rightarrow
 (\, B\ [\Gamma]\, )^{Int_2}$$
is an isomorphism in $\mathcal{B}^{{\rightarrow}_{fin}}$,
 i.e. it has an inverse $\sigma_{B\ [\Gamma]}^{-1}$ in 
$\mathcal{B}^{{\rightarrow}_{fin}}$
i.e.  such that
$$\sigma_{B\ [\Gamma]}\cdot \sigma_{B\ [\Gamma]}^{-1}=id\qquad 
\sigma_{B\ [\Gamma]^{-1}}\cdot \sigma_{B\ [\Gamma]}=id$$

\end{definition}

Now, we are ready to give the definition of $T_{iso}(\mathcal{A})$
as the extension of $T_{cat}(\mathcal{A})$ with a natural isomorphism
between  the interpretations $ (-)^{\cal H}$ and $(-)^{\cal A}$
via {\it coherent isomorphisms}:
\begin{definition}[$T_{iso}(\mathcal{A})$=$T_{cat}(\mathcal{A})$+ coherent isos]\label{coiso}
Given an AU $\mathcal{A}$, let us consider the
above interpretations $(-)^{\cal H}$ and  $(-)^{\cal A}$ of $T_{cat}(\mathcal{A})$
in $Pgr({\cal C}_{T_{cat}(\mathcal{A})})$.

Then
we define $T_{iso}(\mathcal{A})$ as the   \emph{$\mathcal{T}_{au}$-theory} extending $T_{cat}(\mathcal{A})$ with new terms and equalities
formalizing the existence of an  {\it isomorphism of interpretation}
$$\sigma_{-}: (-)^{\cal H}\longrightarrow (-)^{\cal A}$$
in $Pgr({\cal C}_{T_{iso}(\mathcal{A})})$.

Such an isomorphism of interpretation is given by a family of {\it
coherent isomorphisms}
$$\sigma_{B\ [\Gamma]}:
{(B\ [\Gamma])^{\cal H}}_{\Sigma}\longrightarrow {(B\ [\Gamma])^{\cal A}}_{\Sigma}$$
indexed on  any type under context  $B\ [\Gamma]$  of 
$T_{cat}(\mathcal{A})$
satisfying all the  naturality, weakening and substitution conditions
of an isomorphism of interpretation as in definition~\ref{isointe}
with respect to types and terms in $T_{cat}(\mathcal{A})$.
Now we proceed to define such coherent isomorphisms by induction on types
and terms of $T_{cat}(\mathcal{A})$.
\em

In order to define a coherent isomorphism $\sigma_{B\ [\Gamma]}$
indexed on a type $B\ [\Gamma]$ interpreted by a limit
(as the terminal type, the equality type), we actually 
define its inverse
 $\sigma_{B\ [\Gamma]}^{-1}$ as the induced morphism
from the universal property of the limit.
Instead  we define a coherent isomorphism $\sigma_{B\ [\Gamma]}$
indexed on a type $B\ [\Gamma]$ interpreted by a colimit
(as the false type, the sum type, the quotient type)
or by an initial algebra (the list type) 
directly as  the induced morphism
from the universal property of the colimit (or of the initial algebra).

Hence, the coherent isomorphism $\sigma_{\top}$
indexed on the terminal type is defined as the inverse of 
$\sigma_{\top}^{-1}: \top^{\cal A}_{\Sigma}\rightarrow \top $, where $\top^{\cal A}_{\Sigma}$ is a terminal object in $\cal A$,
(that is the domain
interpretation of the terminal type). In turn
$\sigma_{\top}^{-1}$ is defined as the unique morphism in $\mathcal{C}_{ T_{cat}(\mathcal{A})}$ to  the terminal object  $\top$ of  $\mathcal{C}_{ T_{cat}(\mathcal{A})}$.
 
The coherent isomorphism indexed on  $\top\ [\Gamma]$ weakened on a context  is defined
in a way as to satisfy the weakening condition.

The coherent isomorphism indexed on
 any proper type $C$ coming from $\cal A$
$$\sigma_{C}:
{C^{\cal H}}_{\Sigma}\rightarrow {C^{\cal A}}_{\Sigma}$$
is the isomorphism coming from the natural isomorphism of ${\cal V}\cdot
{\tt Em}: {\cal A}\rightarrow  {\cal C}_{T(\mathcal{A})}\rightarrow {\cal A}$
with the identity (recall that  ${\cal V}$ and ${\tt Em}$
 gives an equivalence between ${\cal A}$ and  ${\cal C}_{T(\mathcal{A})}$),
since   ${C^{\cal H}}_{\Sigma}\, \equiv\, C$ while ${C^{\cal A}}_{\Sigma}\, \equiv\, {\cal V}\cdot {\tt Em}(C)$. Note that this isomorphism is  in $\cal A$.

In the next to simplify the notation, given
 a context $\Gamma\, \equiv\, x_1\in C_1,\dots, x_n\in C_n]$,
we simply indicate the component  $\sigma_{C_n}$
of the last context assumption with $\sigma_{\Gamma}$ as the context consisted of one single assumption.

We define the coherent isomorphism indexed on the Indexed Sum type
$$\sigma_{\Sigma_{x\in D}\, B(x) [\Gamma]}: {( \Sigma_{x\in D}\, B(x) )^{\cal H}}_{\Sigma}\rightarrow
{( \Sigma_{x\in D}\, B(x) )^{\cal A}}_{\Sigma}$$
as the inverse of $\sigma_{\Sigma_{x\in D}\,B(x) [\Gamma]}^{-1}$ defined in turn
as follows.
Observe that the last morphism
interpreting $\Sigma_{x\in D}\, B(x)\ [\Gamma]$ according to  $(-)^{\cal A}$
is  $( \Sigma_{x\in D}\, B(x)\ [\Gamma] )^{\cal A}\, \equiv\,
D^{\cal A} \cdot B^{\cal A}$.
Moreover observe that the last
morphism interpreting $\Sigma_{x\in D}\,  B(x)$ according to $(-)^{\cal H}$
is isomorphic to $  B^{\cal H}\cdot D^{\cal H}$ in ${\cal C}_{T_{cat}(\mathcal{A})}/
\Gamma^{\cal H}_{\Sigma}$.
 Hence we define
 $\sigma_{\Sigma_{x\in D}\, B(x) [\Gamma]}^{-1}\, \equiv\, \nu\cdot
\sigma_{B(x)[\Gamma, x\in D]}^{-1}$
where $\nu:B^{\cal H}\cdot D^{\cal H}\rightarrow
 (\, \Sigma_{x\in D}\, B(x)\, )^{\cal H}$ is the isomorphism
between the two object in  ${\cal C}_{T_{cat}(\mathcal{A})}/
\Gamma^{\cal H}_{\Sigma}$ (that is defined  as $<\pi_1\cdot \pi_1, <\pi_2\cdot\pi_1 , \pi_2\cdot \pi_2>$ by using the projections $\pi_1,\pi_2$ of the Indexed Sum type).


We define the isomorphism indexed on the  Equality type 
 $$\sigma_{{\tt Eq}(C, c,d)[\Gamma]}:
{ {\tt Eq}(C, c,d)^{\cal H}}_{\Sigma}\rightarrow {{\tt Eq}(C, c,d)^{\cal A}}_{\Sigma}$$
as follows.
Recall that
  $ {\tt Eq}(C, c,d)^{\cal A}\, \equiv\, 
eq( c^{\cal A}, d^{\cal A})$ is the equalizer of $ c^{\cal A}$ and  $ d^{\cal A}$
in $\cal A$,  as well as ${\tt Eq}(C, c,d)^{\cal H}$ is an equalizer
of $ c^{\cal H}$ and  $ d^{\cal H}$ in  $ {\cal C}_{T_{cat}(\mathcal{A})}$.
Hence we define $\sigma_{{\tt Eq}(C, c,d)[\Gamma]}^{-1}$
 as the unique morphism
toward the equalizer $ {\tt Eq}(C, c,d)^{\cal H}$
 induced by $\sigma_{\Gamma}^{-1}\cdot
 eq( c^{\cal A}, d^{\cal A})$.
This is well defined since
 by hypothesis and naturality of the coherent
isomorphisms we have (recall that equality of morphisms
in $\cal A$ is preserved in  $ {\cal C}_{T_{cat}(\mathcal{A})}$)
 $$ \begin{array}{l}
c^{\cal H}\cdot
 (\, \sigma_{\Gamma}^{-1}\cdot
 eq( c^{\cal A}, d^{\cal A})\, )= \sigma_{C\ [\Gamma]}^{-1}\cdot (\,  c^{\cal A}\cdot
 eq( c^{\cal A}, d^{\cal A})\, ) = \\
= \sigma_{C\ [\Gamma]}^{-1}\cdot (\, d^{\cal A}\cdot
 eq( c^{\cal A}, d^{\cal A})\, )\, )=d^{\cal H}\cdot
 (\, \sigma_{\Gamma}^{-1}\cdot
 eq( c^{\cal A}, d^{\cal A})\, )\end{array}$$

We define the coherent isomorphism indexed on the empty set $\bot$ 
 $$\sigma_{\bot}:\bot\rightarrow \bot^{\cal A}$$
where $\bot^{\cal A}$ is the name of the initial object in $\cal A$,
as the unique morphism in $\mathcal{C}_{ T_{cat}(\mathcal{A})}$
from $\bot$ to $ \bot^{\cal A}$.

Moreover, we define the coherent isomorphism indexed on the empty set
 weakened on a context in a way as to satisfy the weakening condition.


 The isomorphisms for the quotient type, disjoint sums and lists are
 defined analogously.

Note that the described isomorphism of interpretation is indeed uniquely   determined
from the isomorphisms
 indexed on proper types
(because of the naturality, weakening, substitution conditions).
\end{definition}

\begin{definition}
\label{equi}
Let  ${\cal Y}_{iso}:\mathcal{A}\rightarrow \mathcal{C}_{ T_{iso}(\mathcal{A})}$ be the functor defined as 
the embedding of an object $X$ and a morphism $f$ to their copy
as they were in $ \mathcal{C}_{ T_{cat}(\mathcal{A})}$.
\end{definition}

Observe that the embedding functor  ${\cal Y}_{iso}$ preserves the AU structure
up to isomorphisms:
\begin{lemma}
The functor  ${\cal Y}_{iso}:\mathcal{A}\rightarrow \mathcal{C}_{ T_{iso}(\mathcal{A})}$ is an AU functor.
\end{lemma}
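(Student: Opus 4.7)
The plan is to exhibit, for each chosen piece of AU-structure in $\mathcal{A}$, an isomorphism in $\mathcal{C}_{T_{iso}(\mathcal{A})}$ between that structure computed in the syntactic category on ${\cal Y}_{iso}$-images and ${\cal Y}_{iso}$ applied to the structure computed in $\mathcal{A}$. The key point is that the coherent isomorphisms $\sigma_{B\ [\Gamma]}$ introduced in Definition~\ref{coiso} already supply exactly this link: by construction they connect the $(-)^{\cal H}$ interpretation, which on type constructors yields the \emph{free} AU-structure of the syntactic category, with the $(-)^{\cal A}$ interpretation, which via the $\mathcal{A}$-reflector yields the \emph{chosen} AU-structure of $\mathcal{A}$. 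Note that the analogous statement for ${\cal Y}:\mathcal{A}\to\mathcal{C}_{T_{cat}(\mathcal{A})}$ fails in general, since the freely-added structure is literally a different object; it is precisely the passage to $\mathcal{C}_{T_{iso}(\mathcal{A})}$ that makes the required isomorphisms available.

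Concretely, I would proceed case by case through the AU-operations. For the chosen terminal $1_{\cal A}$, composing $\sigma_{\top}^{-1}$ with the proper-type iso $\sigma_{1_{\cal A}}$ yields the isomorphism $\top\cong{\cal Y}_{iso}(1_{\cal A})$. For the chosen binary product $A\times_{\cal A} B$, the product of ${\cal Y}_{iso}(A)$ and ${\cal Y}_{iso}(B)$ in $\mathcal{C}_{T_{iso}(\mathcal{A})}$ is canonically isomorphic to the indexed sum $\Sigma_{x\in A} B$; then $\sigma_{\Sigma_{x\in A} B}$ post-composed with $\sigma_{A\times_{\cal A} B}^{-1}$ supplies the required iso with ${\cal Y}_{iso}(A\times_{\cal A} B)$, while the naturality of $\sigma$ on the projection terms guarantees compatibility with the chosen projections. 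The same pattern applies to equalizers (via $\sigma_{{\tt Eq}(C,c,d)}$), the initial object (via $\sigma_{\bot}$), binary coproducts, quotients of equivalence relations, and parameterized list objects, in each case combining the corresponding coherent iso built in Definition~\ref{coiso} with a proper-type iso on the $\mathcal{A}$-side, and invoking the naturality condition on the relevant structure maps (injections, quotient projection, $r_0^A$, $r_1^A$, and the recursor).

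The main subtlety is bookkeeping: the target of $\sigma_{B\ [\Gamma]}$ on a constructed type is not literally ${\cal Y}_{iso}$ of the corresponding $\mathcal{A}$-operation but its image through the equivalence ${\tt V}\cdot{\tt Em}\simeq id_{\cal A}$, so one must always post-compose with a proper-type iso $\sigma_C$ to land exactly on ${\cal Y}_{iso}$ of the chosen structure in $\mathcal{A}$. The hardest case will be the list-object iso $\sigma_{{\tt List}(A)}$, which is defined from the universal property of the initial algebra: one must check that it intertwines with \emph{both} $r_0^A$ and $r_1^A$ simultaneously, and this reduces to unwinding the recursive definition of $\sigma_{{\tt List}(A)}$ and applying the naturality condition of $\sigma$ to these two constructor terms.
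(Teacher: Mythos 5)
Your argument is exactly the paper's, which disposes of the lemma in one line (``this follows thanks to the presence of coherent isomorphisms''); you have simply unfolded that remark into the case-by-case verification, including the correct observation that one must compose with a proper-type isomorphism to account for ${\tt V}\cdot{\tt Em}\simeq id_{\cal A}$. The proposal is correct and takes essentially the same route as the paper.
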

\begin{proof}
This follows thanks to the presence of coherent isomorphisms.
\end{proof}

We can  prove that the synctactic category associated
to $ T_{iso}(\mathcal{A})$ is equivalent to $\cal A$.
To this purpose we 
 define a translation of $ T_{iso}(\mathcal{A})$ in  $T(\mathcal{A})$:
\begin{definition}
\label{ref}
Let ${\tt St}: T_{iso}(\mathcal{A})\rightarrow T(\mathcal{A})$
be the functor sending any type and term arising respectively from
objects and morphisms of $\cal A$ 
  to the corresponding one in  $T(\mathcal{A})$
 and sending types and terms constructors of $\mathcal{T}_{au}$
 to their copy in $T(\mathcal{A})$. 
Finally coherent isomorphisms get interpreted
as parts of the natural isomorphism between  ${\cal V}\cdot
{\tt Em}: {\cal A}\rightarrow  {\cal C}_{T(\mathcal{A})}\rightarrow {\cal A}$
and the identity.
Indeed, $(B\ [\Gamma])^{\cal A}= {\cal V}\cdot {\tt Em}(\, (B\ [\Gamma])^{\cal H}\, )$

Let $\mathcal{C}({\tt St}): \mathcal{C}_{ T_{iso}(\mathcal{A})}\rightarrow 
{\cal C}_{T(\mathcal{A})}$ be the syntactic functor induced by ${\tt St}$.
\end{definition}
\begin{lemma}
The functor ${\cal Y}_{iso}:\mathcal{A}\rightarrow \mathcal{C}_{ T_{iso}(\mathcal{A})}$
gives rise to an equivalence of category with
the functor ${\tt V}_{iso}\, \equiv\, {\tt V}\cdot \mathcal{C}({\tt St}): \mathcal{C}_{ T_{iso}(\mathcal{A})}\rightarrow{\cal C}_{T(\mathcal{A})}\rightarrow \mathcal{A}$.
\end{lemma}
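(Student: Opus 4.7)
The plan is to exhibit two natural isomorphisms: $\eta : \mathrm{Id}_{\mathcal{A}} \Rightarrow {\tt V}_{iso}\cdot \mathcal{Y}_{iso}$ and $\varepsilon : \mathcal{Y}_{iso}\cdot {\tt V}_{iso} \Rightarrow \mathrm{Id}_{\mathcal{C}_{T_{iso}(\mathcal{A})}}$. The key observation driving everything is that $T_{iso}(\mathcal{A})$ was built precisely so that the family of coherent isomorphisms $\sigma_B : B^{\mathcal{H}} \to B^{\mathcal{A}}$ exists in $\mathcal{C}_{T_{iso}(\mathcal{A})}$, equipped with naturality, weakening, and substitution equalities.

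First I would handle $\eta$. For $X \in \mathcal{A}$, the object $\mathcal{Y}_{iso}(X)$ is the proper-type copy of $X$ in $\mathcal{C}_{T_{iso}(\mathcal{A})}$; the translation ${\tt St}$ sends such proper types back to their ${\tt Em}^T$-copy in $T(\mathcal{A})$, so $\mathcal{C}({\tt St})\cdot \mathcal{Y}_{iso}$ agrees (up to the obvious identification) with ${\tt Em}: \mathcal{A} \to \mathcal{C}_{T(\mathcal{A})}$. Composing with ${\tt V}$ and using the already-established equivalence ${\tt Em} \dashv {\tt V}$ between $\mathcal{A}$ and $\mathcal{C}_{T(\mathcal{A})}$ furnishes $\eta$. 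This step is essentially a repackaging of the known equivalence for $T(\mathcal{A})$.

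The substantive content lies in $\varepsilon$. For any closed type $B$ of $T_{iso}(\mathcal{A})$, one has $B^{\mathcal{H}} = B$ (the $(-)^{\mathcal{H}}$ interpretation realises types as themselves), while $B^{\mathcal{A}} = \mathcal{Y}\cdot {\tt V}\cdot \mathcal{C}({\tt Tr}_{\mathcal{A}})(B)$ by construction of the $(-)^{\mathcal{A}}$ interpretation. Since ${\tt St}$ restricted to $T_{cat}(\mathcal{A})$ coincides with ${\tt Tr}_{\mathcal{A}}$, one has $B^{\mathcal{A}} = \mathcal{Y}_{iso}({\tt V}_{iso}(B))$ after passing through the canonical inclusion $\mathcal{C}_{T_{cat}(\mathcal{A})} \hookrightarrow \mathcal{C}_{T_{iso}(\mathcal{A})}$. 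I would therefore take $\varepsilon_B := \sigma_B^{-1}$, which exists in $\mathcal{C}_{T_{iso}(\mathcal{A})}$ by Definition \ref{coiso}. Naturality of $\varepsilon$ in morphisms $b : B \to C$ (i.e.\ term judgements $b \in C\ [x\in B]$ in suitable context form) is exactly the naturality condition imposed on interpretation morphisms in Definition \ref{isointe}; compatibility with composition and identity in $\mathcal{C}_{T_{iso}(\mathcal{A})}$ follows from the weakening and substitution conditions together with how morphisms of the syntactic category are assembled out of terms.

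The main obstacle is the bookkeeping around step two: one must verify that the reflector-style object $B^{\mathcal{A}}$ really equals $\mathcal{Y}_{iso}({\tt V}_{iso}(B))$ and not merely a reindexing of it, and that the naturality/weakening/substitution equalities of Definition~\ref{isointe} precisely match the commutativities needed for $\varepsilon$ to be natural in arbitrary morphisms of $\mathcal{C}_{T_{iso}(\mathcal{A})}$ (rather than only in morphisms arising directly from term judgements). Both are routine unfoldings, but require care because the syntactic category's morphisms are equivalence classes of terms, so naturality of $\varepsilon$ must be checked on a system of representatives and shown invariant. Once this is in place, the two triangle identities reduce to the already-proved equivalence for $T(\mathcal{A})$ together with the uniqueness clause in Definition~\ref{coiso} that the coherent isomorphism system is determined by its values on proper types.
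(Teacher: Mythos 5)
Your proposal follows essentially the same route as the paper's own (very terse) proof: the direction $(\,{\tt V}\cdot\mathcal{C}({\tt St})\,)\cdot{\cal Y}_{iso}\cong\mathrm{Id}_{\mathcal{A}}$ is inherited from the known equivalence between $\mathcal{A}$ and ${\cal C}_{T(\mathcal{A})}$, and the other direction is witnessed by the coherent isomorphisms on the $\mathcal{T}_{au}$-constructors. Your write-up simply supplies more of the bookkeeping than the paper records, namely identifying $\varepsilon_B$ with $\sigma_B^{-1}$ and deriving naturality from the conditions of Definition~\ref{isointe}.
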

\begin{proof}
Clearly $(\, {\tt V}\cdot \mathcal{C}({\tt St})\, )\cdot {\cal Y}_{iso}$
is naturally isomorphic to the identity.
Instead we prove that ${\cal Y}_{iso} \cdot (\, {\tt V}\cdot \mathcal{C}({\tt St})\, )$ is isomorphic to the identity thanks to coherent isomorphisms
when the functor is applied to $\mathcal{T}_{au}$-constructors.
\end{proof}

This means that we can speak of $T_{iso}(\mathcal{A})$ as {\it the internal
theory of $\cal A$ with coherent isomorphisms}.

Now our purpose is to prove that given two arithmetic universes
$\cal A$ and $\cal B$, the AU functors from $  \cal A$ to $\cal B$
correspond to translations between their internal theories
with coherent isomorphisms, i.e. to translations from $T_{iso}(\mathcal{A})$ to
$T_{iso}(\mathcal{B})$.
To this purpose we first lift an AU functor to a translation
between the corresponding
 free theories generated from the arithmetic universes:
\begin{definition}
Given the arithmetic universes ${\cal A}$ and $\cal B$ with 
 an AU functor $F: \mathcal{A}\rightarrow \mathcal{B}$,
we can define a translation between the free  $\mathcal{T}_{au}$-theories generated
from them 
 $$(-)^F:T_{cat}(\mathcal{A})\rightarrow T_{cat}(\mathcal{B}) $$
as follows: $(-)^F$ translates  types and terms arising from $\cal A$ via $F$, i.e. 
 each proper type  arising from an object
$C$ of $\mathcal{A}$
is translated into $F(C)$ and  each proper term arising from a morphism $c$
is translated into $F(c)$;
moreover $\mathcal{T}_{au}$-constructors are interpreted as the corresponding ones in $T_{cat}({\cal B})$.
\end{definition}

\begin{lemma}\label{fsum}
Given an AU functor $F: \mathcal{A}\rightarrow \mathcal{B}$, the
translation $(-)^F:T_{cat}(\mathcal{A})\rightarrow T_{cat}(\mathcal{B}) $
induced between the corresponding free theories satisfies the following:
for any judgement $B\ [\Gamma]$ then
$$ (\, (\, B\ [\Gamma]\, )^{\cal H}\, )^F\, \equiv\, (\, B\ [\Gamma]\, )^{F}\, )^{\cal H}$$
\end{lemma}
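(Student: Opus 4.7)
The plan is to proceed by structural induction on the formation of the type-in-context judgement $B\ [\Gamma]$ in $T_{cat}(\mathcal{A})$. Recall that $(-)^{\cal H}$ builds the object of $Pgr(\mathcal{C}_{T_{cat}(\mathcal{A})})$ associated to $B\ [\Gamma]$ by assembling it from the interpretations of the components of $\Gamma$ and the type constructors forming $B$, while $(-)^F$ acts compositionally on the syntax, translating proper types/terms via $F$ and translating every $\mathcal{T}_{au}$-constructor to its copy. The statement is then a compatibility assertion between two compositional, inductively defined operations, and should reduce to checking the claim constructor-by-constructor.

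For the base case I would handle proper types coming from $\mathcal{A}$ (and symmetrically proper terms). For a proper type $C$, $C^{\cal H}$ is, by the warning in the definition of $(-)^{\cal H}$, the single morphism $C\to\top$ of $\mathcal{C}_{T_{cat}(\mathcal{A})}$; applying $(-)^F$ (more precisely the induced $\mathcal{C}((-)^F)$) produces $F(C)\to\top$ in $\mathcal{C}_{T_{cat}(\mathcal{B})}$. Conversely, $C^F\equiv F(C)$ as a proper type of $T_{cat}(\mathcal{B})$, and $(F(C))^{\cal H}$ is again $F(C)\to\top$. The two are syntactically identical. The same check works for contexts of proper types, whose $(-)^{\cal H}$-interpretation is just the sequence of projections.

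For the inductive step I would go through each $\mathcal{T}_{au}$-constructor. For indexed sums, say $B\equiv\Sigma_{x\in D}E(x)$, the definition of $(-)^{\cal H}$ assembles $(B\ [\Gamma])^{\cal H}$ from $(D\ [\Gamma])^{\cal H}$ and $(E\ [\Gamma,x\in D])^{\cal H}$ using the indexed-sum constructor in $\mathcal{C}_{T_{cat}(\mathcal{A})}$; since $(-)^F$ translates $\Sigma$ to its copy and commutes with the $\pi_1,\pi_2$ projections, applying it yields exactly the same assembly built from $((D)^F)^{\cal H}$ and $((E)^F)^{\cal H}$ in $\mathcal{C}_{T_{cat}(\mathcal{B})}$. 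Exactly the same pattern disposes of the terminal type, equality types, disjoint sums, quotients and list objects: in each case the $\mathcal{H}$-interpretation uses the constructor itself, and $(-)^F$ commutes with the constructor by definition. Context extensions (weakening, substitution along a term $c$) reduce to the inductive hypothesis because the weakening/substitution diagrams whose first projection gives $B^{\cal H}$ are transported by $(-)^F$ to the corresponding weakening/substitution diagrams for $(B^F)^{\cal H}$, using that $(-)^F$ preserves the chosen AU structure of the syntactic categories up to the strict identity of constructors.

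The main point of care, and the only place I expect friction, is that the statement uses the strict equality $\equiv$ rather than an isomorphism: one must check that the chosen representatives used in building $(-)^{\cal H}$ (the specific $\pi_1$ of a $\Sigma$-type, the specific equalizer term, the specific recursor, etc.) are literally the ones that $(-)^F$ produces from the corresponding representatives on the $\mathcal{A}$-side. This follows from the observation that both $(-)^{\cal H}$ and $(-)^F$ are defined compositionally on raw syntax using the same constructor symbols, so the two operations agree on the nose on each syntactic layer; nothing depends on the ambient AU's chosen structure, which is the reason the induction goes through strictly rather than only up to coherent isomorphism.
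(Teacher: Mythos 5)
Your proof is correct and rests on the same observation as the paper's one-sentence proof: $(-)^F$ is a translation and therefore preserves indexed sums (and their projections) strictly, which is all that $(-)^{\cal H}$ uses to package a judgement $B\ [\Gamma]$. The constructor-by-constructor induction you carry out is harmless but unnecessary, since $(-)^{\cal H}$ does not recurse into the internal structure of $B$ itself --- it only wraps the syntax in iterated indexed sums over the context.
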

\begin{proof}
It follows from the fact that $(-)^F$ is a translation and hence it preserves
indexed sums strictly.
\end{proof}
\begin{lemma}\label{fcor}
Given an AU functor $F: \mathcal{A}\rightarrow \mathcal{B}$, the
translation $(-)^F:T_{cat}(\mathcal{A})\rightarrow T_{cat}(\mathcal{B}) $
induced between the corresponding free theories
allows to define the following interpretations of $T_{cat}(\mathcal{A})$ in $Pgr({\cal B})$
$${(-)^F}^{\cal B}: T_{cat}(\mathcal{A})\longrightarrow Pgr({\cal B})\qquad
{(-)^{\cal A}}^F: T_{cat}(\mathcal{A})\longrightarrow Pgr({\cal B})$$
(by precomposing   $(-)^F$ with $(-)^{\cal B}$ and postcomposing it with
 $(-)^{\cal A}$)
between which there exists an
 isomorphism of interpretation
$$\tau (-):{(- )^{F}}^{\cal B}\longrightarrow 
{ (- )^{\cal A}}^F$$
 \end{lemma}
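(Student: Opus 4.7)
The plan is to construct the family $\tau_{B\ [\Gamma]}$ by induction on the derivation of the type judgement $B\ [\Gamma]$ in $T_{cat}(\mathcal{A})$, mirroring the construction of coherent isomorphisms in Definition~\ref{coiso} but with the role of ``the $\cal A$-reflector being isomorphic to the identity'' replaced by ``$F$ preserves the AU structure up to iso''. Since both interpretations land in $Pgr({\cal B})$, each $\tau_{B\ [\Gamma]}$ will be a morphism in ${\cal B}^{\rightarrow fin}$ whose last component is built first, and the preceding components are then inherited from the corresponding components on sub-judgements by the inductive hypothesis.

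For the base case, a proper type $C$ coming from an object of $\cal A$ is translated by $(-)^F$ to the proper type $F(C)$ of $T_{cat}(\cal B)$. Unfolding shows that ${C^F}^{\cal B}_{\Sigma}$ equals $F(C)$ up to the natural iso between ${\tt V}\cdot {\tt Em}$ in $\cal B$ and the identity, while ${C^{\cal A}}^F_{\Sigma}$ equals $F(C)$ up to the corresponding natural iso in $\cal A$ transported through $F$. Composing these canonical isomorphisms yields $\tau_C$, and the same recipe works for proper terms by naturality of ${\tt V}\cdot{\tt Em}$.

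For the inductive step I would treat each $\mathcal{T}_{au}$-constructor separately. For constructors interpreted by a limit (terminal type, equality type, and their weakenings) I would define $\tau^{-1}$ via the universal property of the limit in $\cal B$, using that $F$ preserves finite limits up to its structural iso together with the inductive hypothesis on sub-judgements. For constructors interpreted by a colimit or an initial algebra (empty type, indexed sum, disjoint sum, quotient type, list type) I would define $\tau$ directly via the universal property of the colimit, again exploiting that $F$ preserves these structures only up to iso. Throughout, Lemma~\ref{fsum} is the key bookkeeping fact: it guarantees that the indexed sums used to interpret context extensions commute strictly with $(-)^F$, so that the two interpretations whose components we are comparing have the same ``shape'' as objects of ${\cal B}^{\rightarrow fin}$ and the square diagrams expressing $\tau$ type-check.

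What remains is the verification of the three conditions of Definition~\ref{isointe}. The naturality condition on terms reduces, case by case, to the same universal-property argument that built $\tau$ on each constructor. The main obstacle, as I expect, is the commutation with weakening and substitution: the chosen pullbacks in $\cal B$ are in general not preserved strictly by $F$, so one has to carefully patch the preservation-up-to-iso of pullbacks together with $\tau$ on the sub-judgements. The list type is the most delicate case, since its weakening/substitution squares rely on the initial algebra structure being transported through $F$, and it is there that the bulk of the calculation would lie; the other cases follow the same template with less overhead.
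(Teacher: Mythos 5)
Your proposal follows essentially the same route as the paper: the isomorphism is built by induction on types, using the coherence isomorphisms of $F$ for the $\mathcal{T}_{au}$-constructors (inverses induced by universal properties for limit-types, direct induced maps for colimit-types and initial algebras) and, for proper types, the composite of the natural isomorphisms of ${\tt V}\cdot{\tt Em}$ with the identity in both $\mathcal{A}$ and $\mathcal{B}$ transported through $F$. Your explicit attention to the naturality, weakening and substitution conditions is in fact more detailed than the paper's own sketch, which only works out the terminal, proper, indexed-sum and equality cases and declares the rest analogous.
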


\begin{proof}
We define the required isomorphism of interpretation by using
the coherent isomorphisms of $F$ needed to preserve
 the AU structure.

For example the isomorphism indexed on the terminal type
$\tau_{\top}: \top^{\cal B}_{\Sigma}\rightarrow F(\top^{\cal A})_{\Sigma}$
is the part of the
coherent isomorphism of $F$ preserving the terminal
object of $\cal A$ represented by $ \top^{\cal A}$
from
the terminal object of $\cal B$ given by $\top^{\cal B}_{\Sigma}$.

Moreover, for any proper type $C$ coming from $\cal A$  we define
$$\tau_{C}:
{F(C)^{\cal B}}_{\Sigma}\rightarrow
F({C^{\cal A}}_{\Sigma})$$
as the composition of the following isomorphisms 
${F(C)^{\cal B}}_{\Sigma}\simeq F(C)\simeq F({C^{\cal A}}_{\Sigma}) $ all derived
from the natural isomorphism of $ {\cal V}\cdot {\tt Em}$ with the identity
both for  ${\cal A}$ and ${\cal B}$:
indeed ${C^{\cal A}}_{\Sigma}\, \equiv\, {\cal V}\cdot {\tt Em}(C)$
is isomorphic to $C$ in ${\cal A}$, hence in ${\cal C}_{T_{cat}(\mathcal{A})}$
which gives an isomorphism $F(C)\simeq F({C^{\cal A}}_{\Sigma})$,
as well as ${F(C)^{\cal B}}_{\Sigma}\, \equiv\, {\cal V}\cdot {\tt Em}(F(C))$
is isomorphic to $F(C)$ for the analogous reason.

The coherent isomorphism indexed on the Indexed Sum type
$$\tau_{\Sigma_{x\in D}\, B(x) [\Gamma]}: {( \Sigma_{x\in D}\, B(x) )^F}^{\cal B}_{\Sigma}\rightarrow
{( \Sigma_{x\in D}\, B(x) )^{\cal A}}^F_{\Sigma}$$
is  defined 
as follows.
Observe that the last morphism
interpreting $( \Sigma_{x\in D}\, B(x) )^F$ according to $(-)^{\cal B}$
is  ${( \Sigma_{x\in D}\, B(x) )^F}^{\cal B}\, \equiv\,
{D^F}^{\cal B} \cdot {B^F}^{\cal B}$.
Moreover for the same reason $(\, \Sigma_{x\in D}\,  B(x)\, )^{\cal A}\, \equiv\,
{D}^{\cal A} \cdot {B}^{\cal A}$ and hence
 ${(\, \Sigma_{x\in D}\,  B(x)\, )^{\cal A}}^F\, \equiv\,
F({D}^{\cal A}) \cdot F({B}^{\cal A})$.
Therefore we define
 $\tau_{\Sigma_{x\in D}\, B(x)\, [\Gamma]}\, \equiv\, 
\tau_{B(x)\, [\Gamma, x\in D]}$.

The coherent isomorphism indexed on the  Equality type 
 $$\tau_{{\tt Eq}(C, c,d)[\Gamma]}: {{\tt Eq}(C, c,d)^F}^{\cal B}_{\Sigma}\rightarrow {{\tt Eq}(C, c,d)^{\cal A}}^F_{\Sigma}$$
is defined as the inverse of  $\tau_{{\tt Eq}(C, c,d)[\Gamma]}^{-1}$
defined in turn  as follows.
Recall that
  ${\tt Eq}(C, c,d)^{\cal A}\, \equiv\, 
eq( c^{\cal A}, d^{\cal A})$ is the equalizer of $ c^{\cal A}$ and  $ d^{\cal A}$
in $\cal A$. Then, by coherent isomorphisms of $F$ preserving the
AU structure we know that $F( {\tt Eq}(C, c,d) ^{\cal A})$ is an equalizer
of $F(c^{\cal A})$ and  $F(d^{\cal A})$ in ${\cal B}$.
Moreover, also ${{\tt Eq}(C, c,d)^F}^{\cal B}\, \equiv\,
{{\tt Eq}(C^F, c^F,d^F)}^{\cal B} $
is an equalizer of $ {c^F}^{\cal B}$ and  $ {d^F}^{\cal B}$ in ${\cal B}$.
Therefore
we define
$\tau_{{\tt Eq}(C, c,d)[\Gamma]}^{-1}$  as the unique morphism
toward the equalizer $ {{\tt Eq}(C^F, c^F,d^F)}^{\cal B} $
 induced by $\tau_{\Gamma}^{-1}\cdot
 F(eq( c^{\cal A}, d^{\cal A}))$.
This is well defined with an argument analogous to that in definition~\ref{coiso}.

The isomorphisms on the other types are defined analogously.
\end{proof}

Now recall from page 1143 of  \cite{Maietti:ModCor}
that we can view a theory as a category and  a translation as a functor.
Hence, given AU's $\mathcal{A}$ and $\mathcal{B}$, we can think
of the collection of translations
from $T_{iso}(\mathcal{A})$ to $T_{iso}(\mathcal{B})$ as a category
 $Th( T_{iso}(\mathcal{A}),
T_{iso}(\mathcal{B}))$ with translations as objects and natural transformations
as morphisms.
Hence we state the following correspondence between AU functors
and translations between internal theories with coherent isomorphisms:
\begin{theorem}
\label{main}
For any AU's $\mathcal{A}$ and $\mathcal{B}$,
there is an equivalence between the category $AU(\mathcal{A}, \mathcal{B})$ 
of AU functors and natural transformations
and the category  $Th( T_{iso}(\mathcal{A}),
T_{iso}(\mathcal{B}))$ of translations and natural transformations.
\end{theorem}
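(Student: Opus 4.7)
My plan is to construct functors in both directions and exhibit natural isomorphisms witnessing a quasi-inverse relationship. In one direction, I lift an AU functor $F:\mathcal{A}\to\mathcal{B}$ to a translation $\Phi(F):T_{iso}(\mathcal{A})\to T_{iso}(\mathcal{B})$. On the $T_{cat}(\mathcal{A})$-part the translation is $(-)^F$ defined before Lemma~\ref{fsum}. It remains to specify where to send each coherent isomorphism $\sigma^{\cal A}_{B[\Gamma]}:(B[\Gamma])^{\cal H}\to(B[\Gamma])^{\cal A}$: identifying $((B[\Gamma])^{\cal H})^F$ with $((B[\Gamma])^F)^{\cal H}$ via Lemma~\ref{fsum}, I send it to the composite $\tau_{B[\Gamma]}\cdot \sigma^{\cal B}_{(B[\Gamma])^F}$ of the coherent isomorphism of $T_{iso}(\mathcal{B})$ with the interpretation isomorphism from Lemma~\ref{fcor}. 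The naturality, weakening, and substitution conditions that make this a legitimate translation follow from the corresponding conditions for $\tau$ (as an interpretation isomorphism) and for $\sigma^{\cal B}$ (by construction in Definition~\ref{coiso}).

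In the opposite direction, I send a translation $\mathcal{T}:T_{iso}(\mathcal{A})\to T_{iso}(\mathcal{B})$ to the AU functor $\Psi(\mathcal{T})\,\equiv\,{\tt V}_{iso}\cdot \mathcal{C}(\mathcal{T})\cdot{\cal Y}_{iso}:\mathcal{A}\to\mathcal{B}$. This is indeed an AU functor because ${\cal Y}_{iso}$ is an AU functor by the Lemma preceding Definition~\ref{ref}, $\mathcal{C}(\mathcal{T})$ preserves AU-structure (being induced by a translation between syntactic categories), and ${\tt V}_{iso}$ is part of an AU equivalence by Lemma~\ref{equi}. Both $\Phi$ and $\Psi$ act on morphisms by whiskering, producing honest functors between the two categories of natural transformations.

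To verify $\Psi\Phi(F)\cong F$ naturally: the composite ${\tt V}_{iso}\cdot \mathcal{C}(\Phi(F))\cdot{\cal Y}_{iso}$ takes an object $X\in\mathcal{A}$ into its syntactic copy in $\mathcal{C}_{T_{iso}(\mathcal{A})}$, translates it via $(-)^F$ to the syntactic copy of $F(X)$, and returns to $\mathcal{B}$ through the equivalence of Lemma~\ref{equi}, yielding $F(X)$ up to isomorphism; this isomorphism is natural in $X$ and coherent in the AU structure by the same lemma. For $\Phi\Psi(\mathcal{T})\cong\mathcal{T}$, the two translations agree strictly on $\mathcal{T}_{au}$-constructors and agree on proper types and terms arising from $\mathcal{A}$ up to the coherent isomorphisms of $T_{iso}(\mathcal{B})$; assembling these into a natural isomorphism of translations uses that types of $T_{cat}(\mathcal{A})$ are inductively generated, so the required component at each type is forced by naturality/weakening/substitution from the components on proper types.

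The principal obstacle I anticipate is the last step: showing $\Phi\Psi(\mathcal{T})\cong\mathcal{T}$ as \emph{translations} (not merely proving their induced syntactic functors are naturally isomorphic). The delicate point is that the isomorphism must commute with the coherent isomorphism data carried by $T_{iso}(\mathcal{B})$; this requires verifying that the interpretation isomorphism built from Lemma~\ref{equi} interacts compatibly with both the $\sigma^{\cal B}$ of the target theory and the image under $\mathcal{T}$ of the $\sigma^{\cal A}$ of the source. I expect this compatibility to reduce, by induction on type constructors, to the defining universal properties used in Definition~\ref{coiso}, so the argument is structural but requires careful bookkeeping of whiskered coherence squares.
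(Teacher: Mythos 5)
Your proposal follows essentially the same route as the paper's proof: the forward direction lifts $F$ via $(-)^F$ and sends each coherent isomorphism to the composite $\tau_{B\ [\Gamma]}\cdot\sigma^{\cal B}_{(B\ [\Gamma])^F}$ using Lemmas~\ref{fsum} and~\ref{fcor}, and the reverse direction composes the syntactic functor of a translation with ${\cal Y}_{iso}$ and ${\tt V}_{iso}$, exactly as in the paper. Your additional discussion of the round-trip isomorphisms (in particular the compatibility of $\Phi\Psi(\mathcal{T})\cong\mathcal{T}$ with the coherent isomorphism data, reduced by induction on type constructors to the universal properties of Definition~\ref{coiso}) fills in detail the paper leaves implicit, but does not change the approach.
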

\begin{proof}
Given an AU functor $F: \mathcal{A}\rightarrow \mathcal{B}$
we define the translation $T(F): T_{iso}(\mathcal{A})\rightarrow T_{iso}(\mathcal{B})$ as follows:
 $T(F)$ interprets  types and terms arising from $\cal A$ via $F$, i.e.
 each proper type  arising from an object
$C$ of $\mathcal{A}$
is translated into the specific type of $T_{iso}({\cal B})$
arising from
$F(C)$,
and  each specific term arising from a morphism $c$ of  $\mathcal{A}$
is translated into the term arising from $F(c)$;
moreover $\mathcal{T}_{au}$-constructors are interpreted as the corresponding ones in $T_{iso}({\cal B})$;
lastly  the interpretation of
a  coherent isomorphism $\sigma_{B\ [\Gamma]}^F$
is given as the composition of a suitable coherent isomorphism
of  $T_{iso}({\cal B})$  with $\tau_{B\ [\Gamma]}$ in lemma~\ref{fcor}:
more in detail
 $$\sigma_{B\ [\Gamma]}^F:{(\, B\ [\Gamma]\, )^{\cal H}}^F_{\Sigma}\longrightarrow
 {(\, B\ [\Gamma]\, )^{\cal A}}^F_{\Sigma}$$
gets interpreted as
$$\sigma_{B\ [\Gamma]}^F\, \equiv\, \tau_{B\ [\Gamma]}\cdot \sigma_{{B\ [\Gamma]}^F}$$
where ${B\ [\Gamma]}^F$ is the translation in $T_{cat}({\cal B})$
of the judgement $B\ [\Gamma]$.  Note
that the domain of $ \sigma_{{B\ [\Gamma]}^F}$ 
can be taken to be ${(\, B\ [\Gamma]\, )^{\cal H}}^F_{\Sigma}$
thanks to lemma~\ref{fsum}.

The translation $T(F)$ is uniquely determined by $F$ up to a
 natural isomorphism
because the interpretation of coherent isomorphisms, given that they commute
with terms, substitution and weakening, is uniquely determined
by interpretation of proper types and terms given by $F$.

Conversely any translation $L: T_{iso}(\mathcal{A})\rightarrow T_{iso}(\mathcal{B})$ gives rise to an AU functor ${\cal C}(L): {\cal C}_{T_{iso}(\mathcal{A})}\rightarrow {\cal C}_{T_{iso}(\mathcal{B})}$ defined on objects and morphisms
in  ${\cal C}_{T_{iso}(\mathcal{A})}$ as their translations in
$T_{iso}(\mathcal{B})$.
Finally ${\tt V}_{iso}\cdot (\, {\cal C}(L)\cdot {\cal Y}_{iso}\, ):
{\cal A}\longrightarrow {\cal C}_{T_{iso}(\mathcal{A})}\rightarrow {\cal C}_{T_{iso}(\mathcal{B})}\rightarrow {\cal B}$ gives an AU functor as desired.

The given correspondence establishes an equivalence of categories.
\end{proof}

From this we can deduce the following:
\begin{corollary}
\label{inteq}
Given the AU's $\mathcal{A}$ and $\cal B$,
the category of interpretations of  $T_{iso}(\mathcal{A})$  into $\cal B$
as in section 5  of \cite{Maietti:ModCor} with interpretation morphisms is
in equivalence with the category of AU functors from  $\mathcal{A}$ to 
$\cal B$.
\end{corollary}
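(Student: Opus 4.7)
The plan is to derive the corollary by chaining Theorem~\ref{main} with a second equivalence, namely one between the category of translations $T_{iso}(\mathcal{A}) \to T_{iso}(\mathcal{B})$ and the category of interpretations of $T_{iso}(\mathcal{A})$ into $\mathcal{B}$. This second equivalence is essentially the internal-language-theorem content for AUs: interpretations of a $\mathcal{T}_{au}$-theory into $\mathcal{B}$ factor (uniquely up to coherent isomorphism) through the canonical interpretation of $T_{iso}(\mathcal{B})$ into $\mathcal{B}$. Once this is in place, composition with the equivalence of Theorem~\ref{main} produces the desired equivalence between $AU(\mathcal{A},\mathcal{B})$ and the category of interpretations.

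First I would exhibit the functor from translations to interpretations. Given $L: T_{iso}(\mathcal{A}) \to T_{iso}(\mathcal{B})$, I compose it with the canonical interpretation of $T_{iso}(\mathcal{B})$ into $Pgr(\mathcal{B})$, which is obtained by sending each type/term of $T_{iso}(\mathcal{B})$ to its representative in $\mathcal{C}_{T_{iso}(\mathcal{B})}$ and then applying ${\tt V}_{iso}: \mathcal{C}_{T_{iso}(\mathcal{B})} \to \mathcal{B}$ from Lemma~\ref{equi} (checking object-by-object that the image lies in $Pgr(\mathcal{B})$). Natural transformations between translations push forward to interpretation morphisms by pointwise application of ${\tt V}_{iso}$ and using that ${\tt V}_{iso}$ preserves the substitution and weakening diagrams up to the coherent isomorphisms already present in $T_{iso}(\mathcal{B})$.

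Conversely, given an interpretation $I$ of $T_{iso}(\mathcal{A})$ in $\mathcal{B}$, I build a translation $L_I: T_{iso}(\mathcal{A}) \to T_{iso}(\mathcal{B})$ by sending each proper type/term arising from $\mathcal{A}$ to the $T_{iso}(\mathcal{B})$-copy of its image under $I$ (mediated by ${\cal Y}_{iso}: \mathcal{B} \to \mathcal{C}_{T_{iso}(\mathcal{B})}$), sending each $\mathcal{T}_{au}$-constructor to its copy, and interpreting the coherent isomorphisms of $T_{iso}(\mathcal{A})$ using the isomorphism data that $I$ must provide on composite types in order to be a legitimate interpretation. Interpretation morphisms similarly give natural transformations between the resulting translations.

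The main obstacle, as in the proof of Theorem~\ref{main}, is establishing that these two constructions form a quasi-inverse pair, not just on objects but on morphisms. The crucial fact is that an interpretation of $T_{iso}(\mathcal{A})$ is determined up to unique coherent isomorphism by its values on proper types and terms of $\mathcal{A}$, since its behaviour on $\mathcal{T}_{au}$-constructors is pinned down by the universal properties (limits, colimits, initial algebras) together with the naturality, weakening and substitution conditions of Definition~\ref{isointe}. Once this rigidity lemma is in place, one checks directly that both round-trip composites are naturally isomorphic to the identity, using Lemma~\ref{equi} to transport between $\mathcal{B}$ and $\mathcal{C}_{T_{iso}(\mathcal{B})}$. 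Combining this equivalence with Theorem~\ref{main} then yields the statement.
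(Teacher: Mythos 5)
Your overall architecture matches the paper's: both proofs reduce the corollary to Theorem~\ref{main} plus a bridge between translations and interpretations. But the bridge is built differently, and the difference matters. The paper's bridge is essentially a tautology: an interpretation of $T_{iso}(\mathcal{A})$ in $\mathcal{B}$ \emph{is} a translation $T_{iso}(\mathcal{A})\rightarrow T(\mathcal{B})$, because the types and terms of the internal language $T(\mathcal{B})$ are by construction paired with their denotations in $\mathcal{B}$. So in one direction the paper composes $T(F)$ with the translation ${\tt St}:T_{iso}(\mathcal{B})\rightarrow T(\mathcal{B})$ of Definition~\ref{ref} and is done; in the other it reads an interpretation $\mathcal{J}$ directly as a translation into $T(\mathcal{B})$, passes to the induced AU homomorphism of syntactic categories, and composes with the equivalences given by ${\cal Y}_{iso}$ and ${\tt V}$.

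You instead try to manufacture an interpretation of $T_{iso}(\mathcal{A})$ in $Pgr(\mathcal{B})$ by composing $L$ with ``representative in $\mathcal{C}_{T_{iso}(\mathcal{B})}$, then ${\tt V}_{iso}$''. This is where your argument has a genuine soft spot: the corollary asks for interpretations ``as in section 5 of \cite{Maietti:ModCor}'', whose substitution and weakening diagrams must be pullbacks handled via the canonical (fibred-functor) machinery, precisely because the chosen pullbacks of an arbitrary AU $\mathcal{B}$ need not be functorial. The image under ${\tt V}_{iso}$ of the syntactic substitution squares is not obviously of that form, and ``checking object-by-object'' does not discharge this. The paper's detour through $T(\mathcal{B})$ exists exactly to avoid the issue: $T(\mathcal{B})$ is built together with a canonical interpretation in $\mathcal{B}$, so a translation landing there inherits one for free. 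Your rigidity observation (interpretations are determined up to unique coherent isomorphism by their values on proper types and terms) is correct and is also what the paper relies on for the uniqueness and equivalence part; if you replace your hand-built interpretation of $T_{iso}(\mathcal{B})$ in $Pgr(\mathcal{B})$ with postcomposition by ${\tt St}$, the rest of your argument goes through and coincides with the paper's.
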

\begin{proof}
Giving an interpretation ${\cal J}$ as in section 5 of \cite{Maietti:ModCor} 
means to give a translation $Tr_{\cal J}$ from  $T_{iso}(\mathcal{A})$  to $T({\cal B})$ (because types and terms of $T({\cal B})$ are defined together
with their interpretation in $\cal B$).
Hence, from  \cite{Maietti:ModCor}  we know that $Tr_{\cal J}$ provides
 an AU homomorphism between  the corresponding syntactic categories 
${\cal C}(Tr_{\cal J}): {\cal C}_{T_{iso}(\mathcal{A})}\rightarrow {\cal C}_{T_(\mathcal{B})}$. This composed with the suitable parts
of the equivalence of the syntactic categories, respectively with $\mathcal{A}$ and
$\cal B$,  gives an AU functor
$$(\, {\tt V}\cdot (\, {\cal C}(L)\, )\cdot {\cal Y}_{iso}: {\cal A}\longrightarrow {\cal B}$$

Conversely, given an AU functor $F$, by theorem~\ref{main}
we get a translation $T(F): T_{iso}(\mathcal{A})\longrightarrow T_{iso}(\mathcal{B})$ which composed with the translation $\tt St$ in definition~\ref{ref}
gives a translation ${\tt St}\cdot T(F):  T_{iso}(\mathcal{A})\longrightarrow T({\cal B})$. This translation corresponds to an interpretation of
$ T_{iso}(\mathcal{A})$ in  ${\cal B}$ because types and terms
of $T({\cal B})$  are defined with their interpretation in $\cal B$
(i.e. the translation of types and terms of  $T_{iso}(\mathcal{A})$ in 
$T({\cal B})$ comes by definition with the interpretation of them in
${\cal B}$).
\end{proof}

\begin{definition}
\label{sub}
Given an AU $\mathcal{A}$, let $T_{iso}(\mathcal{A})[S]$ be the \emph{$\mathcal{T}_{au}$-theory} extending the typed calculus
${\cal T}_{au}$ with 
 $T_{iso}(\mathcal{A})$ 
and with   some extra AU axioms  $S$ of the form
$$c \in C\ [x\in B]\qquad c=d\in C\ [x\in B]$$ i.e.
we add  extra morphisms and equalities  based on $\mathcal{A}$.
Then we write $\mathcal{A}[S]_t$ for the syntactic category $\mathcal{C}_{ T_{iso}(\mathcal{A})[S]}$.

We then call $\mathcal{I}:\mathcal{A}\rightarrow \mathcal{A}[S]_t$
the functor embedding an object into its type naming it in $\mathcal{A}[S]_t$
and a morphism into the term naming it in $\mathcal{A}[S]_t$.
\end{definition}

\begin{theorem}\label{AUExtnThm}
Let $\mathcal{A}$ and $S$ be as in the above definition.
Then $\mathcal{A}[S]_t$ is universal with respect to
being equipped with an AU functor $\mathcal{I}:\mathcal{A}\rightarrow \mathcal{A}[S]_t$
and an interpretation of the extra structure in $S$
according to the notion of interpretation of a morphism
in section 5.31 of \cite{Maietti:ModCor}: for any AU $\mathcal{B}$,
the category $\mathbf{AU}(\mathcal{A}[S]_t,\mathcal{B})$ is equivalent to the
category of pairs $(F,\alpha)$ where $F:\mathcal{A}\rightarrow \mathcal{B}$ is an functor and $\alpha$ interprets the structure in $S$ with respect to $F$.
\end{theorem}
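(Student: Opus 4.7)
The plan is to bootstrap the universal property of $\mathcal{A}[S]_t$ from Theorem~\ref{main} and Corollary~\ref{inteq} by mimicking, on the extended theory $T_{iso}(\mathcal{A})[S]$, the equivalence between AU functors and translations that was already established for $T_{iso}(\mathcal{A})$. The key observation is that $\mathcal{A}[S]_t$ is by definition the syntactic category $\mathcal{C}_{T_{iso}(\mathcal{A})[S]}$, so AU functors out of it should be controlled by translations out of $T_{iso}(\mathcal{A})[S]$, and such translations are determined by their restriction to $T_{iso}(\mathcal{A})$ plus an interpretation of the new axioms in $S$.

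In one direction, given an AU functor $G\colon \mathcal{A}[S]_t\to\mathcal{B}$, I would set $F\, \equiv\, G\cdot\mathcal{I}\colon\mathcal{A}\to\mathcal{B}$, which is an AU functor as the composite of two AU functors (noting that $\mathcal{I}$ is an AU functor by the same argument as for ${\cal Y}_{iso}$, using the coherent isomorphisms present in $T_{iso}(\mathcal{A})[S]$). For each axiom $c\in C\ [x\in B]$ of $S$, I take $\alpha(c)\, \equiv\, G(\mathcal{I}^{\sharp}(c))$, where $\mathcal{I}^{\sharp}(c)$ is the specific term in $\mathcal{A}[S]_t$ naming the fresh arrow; by the preservation properties of $G$, this interprets $S$ with respect to $F$ in the sense of \cite[section 5.31]{Maietti:ModCor}. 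Equalities in $S$ are automatically respected.

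Conversely, given $(F,\alpha)$, I would first apply Theorem~\ref{main} to $F$ to get a translation $T(F)\colon T_{iso}(\mathcal{A})\to T_{iso}(\mathcal{B})$, and then extend it to a translation $T(F,\alpha)\colon T_{iso}(\mathcal{A})[S]\to T_{iso}(\mathcal{B})$ by declaring that each new axiom $c\in C\ [x\in B]$ in $S$ be translated to the term in $T_{iso}(\mathcal{B})$ provided by $\alpha(c)$ (modulo the coherent isomorphisms identifying $T(F)(C)$ with $F(C)^{\mathcal{A}}_{\Sigma}$ used to type-match). Passing to syntactic categories gives an AU homomorphism $\mathcal{C}(T(F,\alpha))\colon\mathcal{A}[S]_t\to\mathcal{C}_{T_{iso}(\mathcal{B})}$; composing with ${\tt V}_{iso}\colon\mathcal{C}_{T_{iso}(\mathcal{B})}\to\mathcal{B}$ from the equivalence before Theorem~\ref{main} yields the desired AU functor $\mathcal{A}[S]_t\to\mathcal{B}$.

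The last step is to check that these two constructions are pseudo-inverse and extend to natural transformations, establishing the claimed equivalence of categories. On the $(F,\alpha)$ side, this is direct: starting from $(F,\alpha)$, constructing the AU functor, then restricting along $\mathcal{I}$, recovers $F$ up to natural iso (by Theorem~\ref{main}) and recovers $\alpha$ on the nose (by construction on proper terms). The opposite direction, starting from $G$, reconstructs a functor naturally isomorphic to $G$ because $G$ is determined up to iso by its action on the proper types and terms together with its action on the new structure $S$, precisely by the same uniqueness argument used in the proof of Theorem~\ref{main}. The main obstacle I expect is bookkeeping: ensuring that the extension of $T(F)$ by $\alpha$ to a genuine translation of $T_{iso}(\mathcal{A})[S]$ is well typed in the presence of coherent isomorphisms (i.e.\ that the type $C$ of an added term $c\in C\ [x\in B]$ in $T_{iso}(\mathcal{A})$ is correctly matched with $F(C)$ in $T_{iso}(\mathcal{B})$ via the isomorphisms $\tau_{C}$ of Lemma~\ref{fcor}), and that this extension is unique up to the natural isomorphism witnessing the equivalence, rather than on the nose.
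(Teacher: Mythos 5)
Your proposal is correct and follows essentially the same route as the paper: lift $F$ to a translation/interpretation of $T_{iso}(\mathcal{A})$ (the paper invokes Corollary~\ref{inteq}, whose construction is exactly your ${\tt St}\cdot T(F)$, so your detour through $T_{iso}(\mathcal{B})$ and ${\tt V}_{iso}$ lands in the same place), extend it by interpreting the new structure $S$ via $\alpha$, pass to the syntactic category and compose with the equivalence to get $\widetilde{F}$, then argue uniqueness up to natural isomorphism by induction on types as in theorem 5.31 of \cite{Maietti:ModCor}. Your explicit treatment of the restriction direction $G\mapsto (G\cdot\mathcal{I},\, G(\mathcal{I}^{\sharp}(-)))$ and of the type-matching via the $\tau$ isomorphisms only makes explicit what the paper leaves implicit.
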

\begin{proof}
Given an AU functor  $F:\mathcal{A}\rightarrow \mathcal{B}$
we lift it to an interpretation $L^F$ of  $T_{iso}(\mathcal{A})$ in
$\cal B$ by corollary~\ref{inteq} and we extend it to interpret
$T_{iso}(\mathcal{A})[S]$ by interpreting
the new added structure as assigned.

Then the interpretation  $L^F$ seen as a translation from
 $T_{iso}(\mathcal{A})[S]$ to  $T(\mathcal{B})$
gives rise to
a  functor ${\cal C}(L^F): \mathcal{C}_{ T_{iso}(\mathcal{A})[S]}\rightarrow\mathcal{C}_{ T(\mathcal{B})} $
and one from $\mathcal{C}_{ T_{iso}(\mathcal{A})[S]}$ to $\cal B$
defined as $\widetilde{F}\, \equiv\, {\tt V}\cdot {\cal C}(L^F):
\mathcal{C}_{ T_{iso}(\mathcal{A})[S]}\rightarrow
\mathcal{C}_{ T(\mathcal{B})}\rightarrow {\cal B}$.

Any other functor extending $F$ can be proved to be naturally isomorphic
to   $\widetilde{F}$ by induction on the type in $T(\mathcal{A})[S]_t$
as done in theorem 5.31 of  \cite{Maietti:ModCor} (note that also the interpretation of coherent isomorphisms is determined
by $F$ and the interpretation of $S$).
\end{proof}

Now considering that the universal property defining our subspace
 $\mathcal{A}[S]_t$ is the same as that in \cite{mv:arithind}
we conclude that the two notions are equivalent:
\begin{corollary}
Let $\mathcal{A}$ and $S$ be as in definition~\ref{sub}.
Then $\mathcal{A}[S]_t$ is equivalent the notion of subspace
 $\mathcal{A}[S]$ in \cite{mv:arithind}.
\end{corollary}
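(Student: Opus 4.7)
The plan is to argue that $\mathcal{A}[S]_t$ and $\mathcal{A}[S]$ both solve the same 2-universal problem and hence are equivalent by a Yoneda-style uniqueness argument. First, I would recall explicitly the universal property satisfied by $\mathcal{A}[S]$ in \cite{mv:arithind}: it comes with an AU embedding $\mathcal{J}:\mathcal{A}\rightarrow\mathcal{A}[S]$ and a distinguished interpretation $\beta$ of $S$, and for any AU $\mathcal{B}$ the category $\mathbf{AU}(\mathcal{A}[S],\mathcal{B})$ is equivalent to the category of pairs $(F,\alpha)$ with $F:\mathcal{A}\rightarrow\mathcal{B}$ an AU functor and $\alpha$ an interpretation of $S$ through $F$. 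Next, Theorem~\ref{AUExtnThm} establishes exactly the same universal property for $\mathcal{A}[S]_t$, with its canonical pair $(\mathcal{I},\alpha_{0})$ coming from definition~\ref{sub}.

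Once both universal properties are in hand, the equivalence is an instance of the standard fact that representing objects for the same pseudofunctor $\mathbf{AU}\to\mathbf{Cat}$ are equivalent. Concretely, I would apply the universal property of $\mathcal{A}[S]_t$ to the AU $\mathcal{B}:=\mathcal{A}[S]$ together with the canonical pair $(\mathcal{J},\beta)$, producing an AU functor $\Phi:\mathcal{A}[S]_t\rightarrow\mathcal{A}[S]$ such that $\Phi\cdot\mathcal{I}\simeq\mathcal{J}$ and $\Phi$ carries $\alpha_{0}$ to $\beta$ up to the appropriate coherent isomorphisms. Symmetrically, applying the universal property of $\mathcal{A}[S]$ to $\mathcal{B}:=\mathcal{A}[S]_t$ with the pair $(\mathcal{I},\alpha_{0})$ yields an AU functor $\Psi:\mathcal{A}[S]\rightarrow\mathcal{A}[S]_t$ with $\Psi\cdot\mathcal{J}\simeq\mathcal{I}$. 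The composites $\Phi\cdot\Psi$ and $\Psi\cdot\Phi$ then each classify the identity data on $\mathcal{A}[S]$ and $\mathcal{A}[S]_t$ respectively, so by the uniqueness (up to natural isomorphism) half of each universal property they are naturally isomorphic to the identity functors, giving the desired equivalence of AUs.

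The main obstacle I anticipate is not in the abstract argument above but in \emph{matching up the two universal properties precisely}. In particular, one must check that the notion of ``interpretation of $S$ with respect to $F$'' used in definition~\ref{sub} and Theorem~\ref{AUExtnThm} (inherited from the type-theoretic framework of \cite{Maietti:ModCor} via corollary~\ref{inteq}) coincides, up to an equivalence of categories of pairs $(F,\alpha)$ that is natural in $\mathcal{B}$, with the corresponding notion in \cite{mv:arithind} (which is built via partial Horn logic). This is essentially a bookkeeping verification: an interpretation of the extra arrows and commutativities of $S$ in the sense of either framework amounts to supplying, for each new arrow of $S$, a morphism in $\mathcal{B}$ over the $F$-image of its source and target, subject to the equational constraints of $S$, and these data clearly correspond. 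Once this identification of the classified data is in place, the equivalence $\mathcal{A}[S]_t\simeq\mathcal{A}[S]$ follows formally.
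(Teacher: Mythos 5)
Your proposal is correct and follows essentially the same route as the paper, which offers no written proof beyond the preceding remark that $\mathcal{A}[S]_t$ satisfies the same universal property as $\mathcal{A}[S]$ in \cite{mv:arithind}; you have simply made explicit the standard uniqueness-of-representing-objects argument and the bookkeeping step of matching the two notions of interpretation of $S$, both of which the paper leaves implicit.
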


\begin{remark}
From \cite{mv:arithind}, we recall that examples of subspaces
of an AU  $\mathcal{A}$
are the following:
the subspace ${\cal A}[c:1\rightarrow U]$, called {\it open},
with the addition of a global element $n:1\rightarrow U$
for an object $U$ in $\cal A$,
 is equivalent to the slice category   ${\cal A}/U$;
 the subspace  ${\cal A}[c: \phi\rightarrow \bot]$, called {\it closed},
with the addition of an element from $\phi$, subobject of the terminal object in $\cal A$, to the interpretation of falsum in $\cal A$, is equivalent
to a suitable  category of sheaves.
\end{remark}

\subsection{Classifying category}
Here we prove that the syntactic category $\mathcal{C}_{ T}$
of a ${T}_{au}$-theory $T$ classifies suitable generic interpretations of $T$
in an arithmetic universe $\cal B$.
\begin{definition}
A {\em  standard interpretation} $\cal J$ of a ${T}_{au}$-theory $T$ in an arithmetic
universe $\cal B$
is a {\it generic interpretation} where the substitution and weakening
diagrams are pullbacks and the induced functor on the syntactic category
$${\cal C}({\cal J}): \mathcal{C}_{ T}\longrightarrow {\cal B}$$ 
is {\em an AU functor}.
We recall that ${\cal C}({\cal J})$ is
defined as follows: on closed types $C$ as $dom({\cal J}(C))$
and on terms $f(x)\in B\ [x\in C]$ as  $q(\, {\cal J}(B) \, ,\,  {\cal J}( C )\, ) \cdot  {\cal J}(\, b\in B\ [x\in C]\, )$.
\end{definition}

\begin{definition}[standard interpretation functor]
\label{class}
 Given an arithmetic universe $\mathcal{A}$ and a  ${T}_{au}$-theory $T$,
there exists a {\em standard interpretation functor} from the category 
of arithmetic universes and AU functors to the category of
small categories {\tt Cat}: 
$$Int_T: AU\longrightarrow {\tt Cat}$$
assigning to an arithmetic universe $\cal B$ the category
of standard interpretations with interpretation morphisms
$Int(T, {\cal B})$, and to an AU functor $F: {\cal A}\rightarrow {\cal B}$
the functor
$$Int_T(F): Int(T, {\cal A})\longrightarrow  Int(T, {\cal B})$$
assigning to a standard interpretation $\cal J$ the interpretation
 $\cal J_F$ obtained as follows: if $\cal J$ interprets a type $B\ [\Gamma]$
as $b_{1}, b_{2},...,b_{n}$ with $b_1: C\rightarrow 1$,
 then $\cal J_F$  interprets the same type
as $!^F(C), F(b_{2}),...,F(b_{n})$; and if $\cal J$ interprets a term $b\in B\ [\Gamma]$ as the section $b^{\cal J}$, then $\cal J_F$  interprets  the same term
as  $F(b^{\cal J})$. 
The pullback and weakening diagrams are the value under $F$ of those
 induced by $\cal J$.
This is a standard interpretation because $F$ is an AU functor.
\end{definition}
We can show that the syntactic category of a theory represents
the interpretation functor $Int_T: AU\longrightarrow {\tt Cat}$:
\begin{theorem}
Given a  ${T}_{au}$-theory $T$, its
 interpretation functor $Int_T: AU\longrightarrow {\tt Cat}$ 
is natural isomorphic to the covariant functor $AU(\, {\cal C}_T\, ,\, -\, )$,
and hence for every AU $\cal A$  the category of standard interpretations
of $T$ in $\cal A$ is isomorphic to that of AU functors and natural transformations
$AU(\,  {\cal C}_T\, ,\, { \cal A}\, )$.
\end{theorem}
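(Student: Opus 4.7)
The plan is to exhibit, for every AU $\cal A$, an explicit isomorphism of categories
$$\Phi_{\cal A}\colon Int(T,{\cal A})\longrightarrow AU({\cal C}_T,{\cal A})$$
and then check naturality in $\cal A$. On objects, $\Phi_{\cal A}$ sends a standard interpretation $\cal J$ to the induced functor ${\cal C}({\cal J})$, which by the very definition of \emph{standard} is an AU functor. On morphisms, an interpretation morphism $\sigma\colon {\cal J}_1\to {\cal J}_2$ supplies, for each judgement $B\ [\Gamma]$, an arrow-list morphism in ${\cal A}^{\rightarrow fin}$; the last component $\sigma_B$ furnishes the value at the closed-type representative of $B\ [\Gamma]$, and the naturality, weakening and substitution conditions of Definition~\ref{isointe} are exactly what is needed to assemble these into a natural transformation ${\cal C}({\cal J}_1)\Rightarrow {\cal C}({\cal J}_2)$.

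For the inverse $\Psi_{\cal A}\colon AU({\cal C}_T,{\cal A})\to Int(T,{\cal A})$, I would first fix the canonical generic interpretation ${\cal G}_T$ of $T$ in itself: each type $B\ [\Gamma]$ is interpreted by its own display-map representation inside $Pgr({\cal C}_T)$, and substitution/weakening are interpreted by the tautological pullback squares provided by the syntactic construction of ${\cal C}_T$; this is arranged so that ${\cal C}({\cal G}_T)=\mathrm{Id}_{{\cal C}_T}$. Given an AU functor $F\colon {\cal C}_T\to {\cal A}$, define $\Psi_{\cal A}(F)$ to be the componentwise image $F\cdot {\cal G}_T$, using the chosen AU-preservation isomorphisms of $F$ to furnish the required pullback data for substitution and weakening. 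The result is standard by construction, and ${\cal C}(\Psi_{\cal A}(F))=F$ on the nose.

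That $\Phi_{\cal A}$ and $\Psi_{\cal A}$ are mutually inverse follows on one side from the identity ${\cal C}(\Psi_{\cal A}(F))=F$ just noted, and on the other from the observation that a standard interpretation $\cal J$ is determined by the pair consisting of ${\cal C}({\cal J})$ together with the chosen pullback data for substitution and weakening, which are forced once one knows the action on proper types and terms plus the AU-structure isomorphisms; hence $\Psi_{\cal A}({\cal C}({\cal J}))={\cal J}$. The correspondence extends to morphisms by the same naturality/weakening/substitution bookkeeping used above to move between interpretation morphisms and natural transformations.

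Naturality in $\cal A$ is a direct unwinding: for an AU functor $H\colon {\cal A}\to {\cal B}$, Definition~\ref{class} gives $Int_T(H)({\cal J})$ by postcomposing all interpretation data with $H$, so ${\cal C}(Int_T(H)({\cal J}))=H\cdot {\cal C}({\cal J})$, which is precisely $AU({\cal C}_T,H)(\Phi_{\cal A}({\cal J}))$. The main obstacle I expect is the bookkeeping of chosen structural data: to get an honest isomorphism rather than only an equivalence, one must verify that the tautological generic interpretation ${\cal G}_T$ in ${\cal C}_T$ can be set up with pullback and weakening choices so that both round-trips are strict identities, which forces some care in how the chosen AU structure in ${\cal C}_T$ interacts with the syntactic display maps and with the structural isomorphisms witnessing preservation of structure by $F$.
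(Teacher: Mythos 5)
Your proposal is correct and follows essentially the same route as the paper: your tautological generic interpretation ${\cal G}_T$ is exactly the paper's $(-)^{\cal H}$ interpretation, and your inverse $\Psi_{\cal A}(F)=F\cdot{\cal G}_T$ is precisely the paper's ${\cal I}_F = Int_T(F)\bigl((-)^{\cal H}\bigr)$. You simply spell out the round-trip and naturality checks (and the strictness caveat about chosen pullback data) that the paper leaves implicit.
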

\begin{proof}
By definition a standard interpretation $\cal J$ of $T$ in $\cal A$ 
induces an AU functor ${\cal C}({\cal J}): {\cal C}_T\longrightarrow { \cal A} $. 
Conversely given an  AU functor $F : {\cal C}_T\longrightarrow { \cal A} $
we define the interpretation ${\cal I}_F$ of  $T$ in $\cal A$ as  $Int_T(F)(
(-)^{\cal H})$
since  the  $(-)^{\cal H}$ intepretation is indeed standard
in  ${\cal C}_T$.
\end{proof}

\section{Acknowledgements}
This work arose as a type theoretic version of the subspace definition in \cite{mv:arithind}
and hence I thank Steve Vickers very much for very stimulating discussions.
I also acknowledge useful discussions with Pino Rosolini and Giovanni Sambin. 
\bibliographystyle{amsalpha}
\bibliography{biblioau}
\end{document}